\definecolor{black}{gray}{0}
\definecolor{ttgray}{gray}{0.5}
\definecolor{bfred}{rgb}{0.4,0,0}
\definecolor{emgreen}{rgb}{0,0.3,0}
\definecolor{emmagenta}{rgb}{0.6,0.07,0.07}
\definecolor{sfgb}{rgb}{0,0.3,0.3}
\definecolor{mathblue}{rgb}{0,0,0.4}
\newcommand{\chegakh}[1]{\{\kern -.25em [#1]\kern -.25em \}}
\newcommand{\Dchegakh}[1]{\{\kern -.3em\{\kern -.25em [#1]\kern -.25em\}\kern -.3em\}}
\providecommand{\Im}{\loglike{lm}}
\newtheorem{ex}{Example}
\title
{An Iterative  Wiener--Hopf method for triangular matrix functions with exponential factors}
\author{Anastasia V. Kisil}    
\address{DAMTP, University of Cambridge,
  Wilberforce Road, Cambridge, CB3 0WA, UK }
\email{a.kisil@maths.cam.ac.uk}
        \newtheorem{thm}{Theorem}[section]
    \theoremstyle{definition}
    \theoremstyle{remark}
    \newtheorem{rem}[thm]{Remark}
\begin{document}

 \begin{abstract}
   This paper introduces a new method for constructing approximate
   solutions to a class of Wiener--Hopf equations. This is
   particularly useful since exact solutions of this class of
   Wiener--Hopf equations, at the moment, cannot be obtained. The
   proposed method could be considered as a generalisation of the pole
   removal technique. The error in the approximation can be explicitly
   estimated, and by a sufficient number of iterations could be made
   arbitrary small. Typically only a few iterations are required for
   practical purposes.  The theory is illustrated by numerical
   examples that demonstrate the advantages of the proposed
   procedure. This method was motivated and successfully applied to
   problems in acoustics.
\end{abstract} 

\keywords
{Wiener--Hopf Equations,  Riemann-Hilbert Problem, Iterative Methods}  

\maketitle
\
\section{Introduction} 
\label{sec:Introduction}

Many boundary value problems in mathematical physics can be approached
by the Wiener--Hopf method. Originally the Wiener--Hopf technique was
developed for linear PDEs with semi-infinite boundary conditions like
the Sommerfeld half-plane problem. This was later extended to boundary
conditions on multiple semi-infinite lines~\cite{history}. Although,
the reduction to the Wiener--Hopf equation is still straightforward,
finding a
solution of the resulting equation became
challenging~\cite{Constructive_review, Ant_crack, Mer,
  Daniele,Fokas_uni, Crowdy-Elena}. This is due to the fact that the
Wiener--Hopf factorisation of a matrix (rather than a scalar) function is now needed. Hence the ability to solve such equations is
crucial to extending the classical use of Wiener--Hopf techniques to
more realistic and complicated settings. Also, such types of matrix
Wiener--Hopf equations are associated with convolution-type operators on
a finite interval~\cite{Feldman2000, Feldman2004, Pure_rew_2013} and arise in a number of applications
\cite{Nigel_cyl,Abr_exp}.  The aim of this paper is to develop an
algorithmic iterative method of solution for some equations of this
type.

More precisely, we construct an approximate solution of a Wiener--Hopf
equation with triangular matrix functions containing exponential
factors. The aim is to find functions \(\Phi_-^{(0)}(\alpha)\),
\(\Phi_-^{(L)} (\alpha)\),
\(\Psi_+^{ (0)}(\alpha)\)
and \(\Psi_+^{ (L)}(\alpha)\)
analytic in respective half-planes, satisfying the following
relationship
\begin{equation}
  \label{eq:main}
\begin{pmatrix}
 \Phi_-^{(0)}(\alpha) \\
 \Phi_-^{(L)} (\alpha)
 \end{pmatrix}=\begin{pmatrix}
  A(\alpha)&  B(\alpha)e^{i \alpha L} \\
 C(\alpha)e^{-i \alpha L} & 0
 \end{pmatrix}\begin{pmatrix}
 \Psi_+^{ (0)}(\alpha) \\
 \Psi_+^{ (L)}(\alpha) 
 \end{pmatrix}+ \begin{pmatrix}
 f_1(\alpha)\\
f_2 (\alpha)
 \end{pmatrix},
\end{equation}
on the strip \(a\le \Im(\alpha)\le b\). The remaining functions \(A(\alpha)\), 
\(B(\alpha)\) and \(C(\alpha)\) are known and \(L\) is a positive
constant.  The conditions on the matrix functions are
specified in Section \ref{sec:Preliminaries}. The existence of a such
factorisation under certain assumptions was addressed in
\cite[p.~150]{Spit_book} and \cite{Existance_osi}.

Wiener--Hopf equations of the type~\eqref{eq:main} have been the topic
of previous research, for example, in the case of meromorphic matrix
entries \cite{Antipov, Aktosun1992} and in the framework of almost periodic
functions \cite{karlovich-spit-alm-per}. However, presently no complete solution
of \eqref{eq:main} is known. Furthermore, the
general question of constructive Wiener--Hopf factorisation is widely
open \cite{history, Constructive_review}.

In the literature, concerning applications of the Wiener--Hopf technique,
one of most widely used methods is the so-called ``pole removal'' or
``singularities matching'' \cite{bookWH}*{\S~4.4,~5.3}
\cite{daniele2014wiener}*{\S~4.4.2}. It has a severe limitation that
certain functions have to be rational or meromorphic. One way to
extend the use of this method is by employing a rational approximation
\cite{Pade, My1}, which was successfully used in
\cite{Abraha_all_pade, Ab_ex, Nigel_cyl, MyD-K}. However, even with
this extension the class of functions, which can be solved, is rather
limited. In this paper we propose a different extension to the pole
removal technique for functions that have arbitrary
singularities. This work was motivated by certain problems in
acoustics which are discussed below.

We note, that there is an additional difficulty in finding a
factorisation of the matrix in \eqref{eq:main} because of the presence
of analytic functions \(e^{i \alpha L}\)
and \(e^{-i \alpha L}\)
which have exponential growth in one of the half-planes.  The first
step of the procedure proposed here is a partial factorisation, that
has at most polynomial growth in the respective half-planes. The
classes for which this is a complete factorisation are also
discussed. The next step is the Wiener--Hopf additive splitting of
the remainder term that hinders the application of Liouville's
theorem. The additive Wiener--Hopf splitting is routine unlike the
multiplicative one, which is also utilised in other novel methods
\cite{Mishuris-Rogosin, assym_2}.  After the application of the analytic
continuation there are still some unknowns in the formula, those are
approximated by an iterative procedure. The presence of the
exponential terms speeds up convergence. At each step a scalar
Wiener--Hopf equation is solved.
We will compare the proposed method to the pole removal technique. It
is shown that the iterations converge quickly to the exact solution.

The procedure could be summarised as follows (see
Section~\ref{sec:Iterative} for details):
\begin{enumerate}
\item A partial factorisation
with exponential factors in the desired half-planes.
\item Additive splitting of some terms.
\item Application of Liouville's theorem.
\item Iterative procedure to determine the remaining unknowns.
\end{enumerate}
This procedure bypasses the need to construct a multiplicative matrix
factorisation. So in particular partial indices (which are know to be
linked to stability~\cite{Spit_book,MyD-K}) are not obtained. Instead
the growth at infinity of certain terms play a role. In this paper we
will treat the base case with no growth at infinity and some other
cases will be treated in~\cite{my+Lorna}.

The structure of the paper is as follows. In
Section~\ref{sec:Preliminaries} the required classes of functions are
introduced and their essential properties are listed.  We also provide
some motivation behind those Wiener--Hopf systems.  In 
Section~\ref{sec:Iterative} the proposed iterative procedure is
described in detail and its convergence is examined in
Section~\ref{sec:Convergence}. Section~\ref{sec:ex} presents numerical
results of two examples (graphically illustrated) to compare the iterative procedure
to the exact solution in a variety of cases. Lastly we describe
possible future work.


\section{Preliminaries}
\label{sec:Preliminaries}

In order to formulate the problem \eqref{eq:main} we have to specify
the suitable class of functions for the all terms in the
equation. Those classes of functions are convenient for formulating
scalar Wiener--Hopf splittings which will form the foundation of the
proposed method. We will also need some properties of this class
of function which will ensure we stay the the desired class of
functions after each iterations. We will also briefly describe the
motivation behind this method.  Firstly, we will recall some definitions.

\subsection{Classes of functions \(\Dchegakh{a,b}\)}
\label{sec:1}

The H\"{o}lder continuous
functions on the compactified real line are defined as functions \(F(x)\) for
which there exist constants \(C\) and \(\lambda\) such that for all
real \(x_1\) and \(x_2\) we have
\begin{displaymath}
  |F(x_1 )- F(x_2 )| \leq C|x_1 - x_2 |^\lambda , \quad \text{ for all }
  x_1,x_2\in \mathbb{R}
\end{displaymath}
and for all real \(x_1\) and \(x_2\) with modulus greater than one the
following holds
\begin{displaymath}
  |F(x_1 )- F(x_2 )| \leq C\left|\frac{1}{x_1} - \frac{1}{x_2}  \right|^\lambda ,
\end{displaymath}
which is a H\"older condition around infinity. In
the following, the ``real line'' will always mean the ``compactified real line''.

A H\"{o}lder continuous
function produces  well-defined boundary values
\(G_\pm(t)=\lim_{y\searrow 0} G(t\pm iy )\) of its Cauchy type
integral \(G(t+iy)\) at every point \(t\) of the
compactified real line~\cite{Musk}. The class
\(L_2(\mathbb{R)}\) is also very useful since the Fourier transform is an 
isometry of \(L_2(\mathbb{R)}\) due to Plancherel's theorem.
Thus, the intersection~\cite{Ga-Che}*{\S~1.2}:
\[ \Dchegakh{0}=L_2(\mathbb{R}) \cap \text{ H\"{o}lder},\] 
turns out to be convenient for the Wiener--Hopf
problems.  The pre-image of \(\Dchegakh{0}\) under the Fourier
transform is denoted \(\chegakh{0}\).

 Given a function \(f\in\chegakh{0}\) on the real line we can define the splitting
\begin{equation}
\label{eq:plus}
 f_+(t)= \left\{
 \begin{array}{rl}
 f(t) & \text{if } t>0,\\
  0 & \text{if } t<0,\\ 
 \end{array} \right. 
\quad
 f_-(t)= \left\{
 \begin{array}{rl}
 0 & \text{if } t>0,\\
 -f(t) & \text{if } t<0.\\ 
 \end{array} \right.
\end{equation}
Using this splitting, we define the class \(\chegakh{0,\infty}\) to contain functions \(f_+(t)\) and
the class \(\chegakh{-\infty,0}\) to contain  functions \(f_-(t)\) for all \(f\in\chegakh{0}\).

In the rest of the paper we will need to refer to functions that are
analytic on strips or (shifted) half-planes. 
Following~\cite{Ga-Che}*{\S~13}, we define
\(f\in \chegakh{a}\) if \(e^{-ax}f\in \chegakh{0}\), that is a shift in the Fourier
space. Finally, \(f=f_++f_-\in \chegakh{a,b}\) if \(f_+\in \chegakh{a}\) and
\(f_-\in \chegakh{b}\). From the definition of \(f_+\) and \(f_-\) it is
clear that also \(f_+\in \chegakh{a, \infty}\) and \(f_-\in \chegakh{-\infty,b}\).

The Fourier transform of functions in the class \(\chegakh{a,b}\) is denoted
\(\Dchegakh{a,b}\) (including the case \(a=-\infty\) and/or \(b=\infty\)). The celebrated Paley--Wiener theorem states that
the following 
\begin{eqnarray}
\label{eqn:ana1}
F_+(z)\in\Dchegakh{a,\infty} &\Longrightarrow &  F_+(z) \text{ analytic in Im}z>a ,\\
\label{eqn:ana2}
  F_-(z)\in\Dchegakh{-\infty,b}  &\Longrightarrow& F_-(z) \text{ analytic in Im}z<b,\\
\label{eqn:ana3}
F(z)\in\Dchegakh{a,b} &\Longrightarrow& F(z) \text{ analytic in } a<\mathrm{Im}\,z<b.
\end{eqnarray}
For the remainder of the paper we will
assume that \(a<0\) and \(b>0\), i.e. the strip encloses the real
axis.

Now we will recall the additive and multiplicative scalar splitting of
functions belonging to \(\Dchegakh{a,b}\). A convention will be used to distinguish the additive and the
multiplicative splitting by using the superscript and subscript
notations: e.g \(F^\pm\) for additive and \(K_\pm\) for multiplicative
ones. 
\begin{thm}[Additive splitting~\cite{Ga-Che}]
  \label{prob:jump-probl}
  On the real line a function \(F(t)\in \chegakh{0}\) is
  given. There exist two functions \(F^{\pm}(z)\) analytic
  in the upper and lower half-planes with boundary functions on the
  real line belonging to the classes \(\Dchegakh{0,\infty}\) and
   \(\Dchegakh{-\infty, 0}\), satisfying:
\begin{equation*}
  F(t)=F^{+}(t)+F^-(t), 
\end{equation*}
on the real line.  
\end{thm}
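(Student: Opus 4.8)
The plan is to construct the additive splitting explicitly via the Cauchy-type integral, which is the standard tool the excerpt has already flagged as giving well-defined boundary values for Hölder functions. Given $F(t)\in\chegakh{0}$, I would define
\begin{equation*}
  G(z)=\frac{1}{2\pi i}\int_{\mathbb{R}}\frac{F(\tau)}{\tau-z}\,d\tau,
\end{equation*}
which is analytic for $z$ off the real axis, and then set $F^{+}(t)=G_{+}(t)=\lim_{y\searrow 0}G(t+iy)$ and $F^{-}(t)=-G_{-}(t)=-\lim_{y\searrow 0}G(t-iy)$. The first thing to verify is that $G$ is analytic in each open half-plane separately, so $F^{+}$ extends analytically into $\mathrm{Im}\,z>0$ and $F^{-}$ into $\mathrm{Im}\,z<0$; this is immediate from differentiating under the integral sign, the integrand being holomorphic in $z$ away from the contour.

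Next I would establish the jump relation $F(t)=F^{+}(t)+F^{-}(t)=G_{+}(t)-G_{-}(t)$ on the real line. This is exactly the Sokhotski--Plemelj formula, and the excerpt's citation to \cite{Musk} is precisely what guarantees that for a Hölder continuous $F$ the one-sided limits $G_{\pm}$ exist everywhere on the compactified line and satisfy $G_{+}(t)-G_{-}(t)=F(t)$ together with $G_{+}(t)+G_{-}(t)=\frac{1}{\pi i}\,\mathrm{p.v.}\!\int F(\tau)/(\tau-t)\,d\tau$. So the splitting identity follows directly once the boundary values are known to exist, with no further estimation needed.

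The genuinely substantive step, and the one I expect to be the main obstacle, is showing that the boundary functions $F^{+}$ and $F^{-}$ land in the correct classes $\Dchegakh{0,\infty}$ and $\Dchegakh{-\infty,0}$ respectively, rather than merely being bounded analytic functions. This requires two things: that $G_{\pm}$ are again Hölder continuous, and that they lie in $L_2(\mathbb{R})$. The Hölder regularity of the Cauchy transform of a Hölder function is a classical (but nontrivial) Privalov-type theorem, again available from \cite{Musk}; the $L_2$ membership is cleanest to see on the Fourier side, where the operators $F\mapsto F^{\pm}$ correspond to multiplication by the indicator functions of the positive and negative half-lines, which are bounded (indeed norm-one) projections on $L_2(\mathbb{R})$ by Plancherel's theorem. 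Indeed this is why the class $\Dchegakh{0}$ was defined as the intersection $L_2\cap\text{H\"older}$: the $L_2$ factor is preserved by the Fourier-multiplier description of the split, and the Hölder factor is preserved by Privalov's theorem, so the two properties together are exactly stable under the operation, placing $F^{+}\in\Dchegakh{0,\infty}$ and $F^{-}\in\Dchegakh{-\infty,0}$ as required.

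Finally, I would remark that the decay of $F^{\pm}$ at infinity (needed for membership in the $\Dchegakh{\cdot}$ classes on the compactified line) follows from the Hölder condition around infinity that was built into the definition of $\chegakh{0}$, ensuring $G(z)\to 0$ as $|z|\to\infty$ in each half-plane; this closes the argument without any additional normalisation constant being required.
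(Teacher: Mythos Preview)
Your argument is correct and is exactly the classical construction: Cauchy-type integral, Sokhotski--Plemelj jump relations, Privalov's theorem for H\"older preservation, and the Fourier/Plancherel description of the projections for the $L_2$ part. There is nothing to object to in the outline.

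However, there is no comparison to make: the paper does not supply its own proof of this theorem. It is stated as a quoted result from the reference~\cite{Ga-Che} (Gakhov--Cherskii) and used as a black box throughout; the surrounding text only sets up the function classes and records the statement. The approach you sketch is precisely the one found in that reference and in~\cite{Musk}, so your proposal is faithful to the intended source even though the paper itself omits the argument.
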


Next, the multiplicative splitting or factorisation problem is
examined. The \emph{index} of a continuous non-zero function \(K(t)\)
on the real line is the winding number of the curve
\((\text{Re } K(t), \mathrm{Im}\, K(t)) \),
\(t \in \mathbb{R}\).

\begin{thm}[Multiplicative splitting~\cite{Ga-Che}]
  \label{prob-R-H-factor}
  Let a non-zero function \(K(t)\), such that  \(K(t)-1\in
  \chegakh{0}\) and \(\mathrm{ind}\, K(t)=0\), be   given. There exist two
  functions \(K_{\pm}(z)\) analytic in the upper and lower half-planes respectively
  with boundary functions \(K_{\pm}(t)-1\) on the real line belonging to the
  classes \(\Dchegakh{0,\infty}\) and \(\Dchegakh{-\infty, 0}\),
  satisfying:
  \begin{equation*}
    K(t)=K_{+}(t)K_-(t),
  \end{equation*}
  on the real line.
\end{thm}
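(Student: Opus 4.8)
The plan is to reduce this multiplicative factorisation to the additive splitting of Theorem~\ref{prob:jump-probl} by passing through the logarithm. First I would set $L(t)=\log K(t)$ and argue that it is a legitimate element of $\chegakh{0}$. The hypothesis $\mathrm{ind}\, K(t)=0$ is exactly what makes this possible: since the winding number of $K$ about the origin vanishes, $\arg K(t)$ returns to its initial value as $t$ traverses the compactified real line, so a single-valued continuous branch of $\log K$ exists. Because $K(t)-1\in\chegakh{0}$, the function $K$ is H\"older continuous and tends to $1$ at infinity; being continuous, non-zero and equal to $1$ at infinity on the compact real line, $|K|$ is bounded away from $0$, so the range of $K$ lies in a region on which $\log$ is analytic and Lipschitz.

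Next I would verify that $L\in\chegakh{0}$. The H\"older estimates for $L$ follow by composing the H\"older function $K$ with $\log$, which is locally Lipschitz on the range of $K$; the H\"older condition around infinity is inherited in the same way. For the decay part of the class, near infinity one has $\log K=\log\bigl(1+(K-1)\bigr)=(K-1)+O\bigl((K-1)^2\bigr)$, so $L$ inherits the $L_2$-type behaviour of $K-1$. With $L\in\chegakh{0}$ established, I would apply Theorem~\ref{prob:jump-probl} to obtain $L(t)=L^{+}(t)+L^{-}(t)$, with $L^{+}$ analytic in the upper half-plane and boundary values in $\Dchegakh{0,\infty}$, and $L^{-}$ analytic in the lower half-plane with boundary values in $\Dchegakh{-\infty,0}$. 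Setting
\[
  K_{+}(z)=\exp\bigl(L^{+}(z)\bigr),\qquad K_{-}(z)=\exp\bigl(L^{-}(z)\bigr),
\]
each $K_{\pm}$ is analytic in its half-plane (since $L^{\pm}$ is and $\exp$ is entire), and on the real line $K_{+}(t)K_{-}(t)=\exp(L^{+}+L^{-})=\exp(\log K)=K(t)$, as required.

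The hard part will be confirming that $K_{\pm}(t)-1$ lands back in the one-sided classes $\Dchegakh{0,\infty}$ and $\Dchegakh{-\infty,0}$, rather than merely being analytic. Writing $K_{\pm}-1=\exp(L^{\pm})-1=\sum_{n\geq 1}(L^{\pm})^{n}/n!$, this membership follows provided each one-sided class is a Banach algebra under pointwise multiplication and the series converges in its norm; the half-plane of analyticity is respected because products of one-sided boundary functions remain one-sided and each partial sum is analytic there, with $L^{\pm}\to 0$ at infinity giving $K_{\pm}\to 1$. I would therefore isolate this Banach-algebra closure of $\Dchegakh{0,\infty}$ (equivalently, closure of $\chegakh{0,\infty}$ under convolution together with the H\"older and decay structure) as the single structural fact to check carefully, since the remaining log/exp reduction is routine once it is in place.
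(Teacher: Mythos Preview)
The paper does not prove this theorem; it is quoted as a known result from~\cite{Ga-Che}, and no proof environment follows the statement. Your log--split--exponentiate argument is precisely the classical route taken in that reference (and in Gakhov's and Noble's treatments more generally), so there is nothing to compare against here beyond saying that you have reconstructed the standard proof.

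One small caution on your sketch: the step ``$L\in\chegakh{0}$ because $\log(1+(K-1))=(K-1)+O((K-1)^2)$'' is heuristic as written. The class in question is defined via the Fourier transform, not directly by a decay condition, so the cleanest way to make this rigorous is exactly what you flag at the end---use that $\Dchegakh{0}$ (equivalently $\chegakh{0}$) is a commutative Banach algebra with unit adjoined, so that $\log(1+g)$ and $\exp(h)-1$ stay in the algebra whenever $g$, $h$ do and $1+g$ is invertible. That single structural fact handles both the membership of $\log K$ and the membership of $K_{\pm}-1$ simultaneously, and it is how \cite{Ga-Che} organises the argument.
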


\begin{rem}
  Note, that if the original function in Theorem \ref{prob:jump-probl}
  or Theorem \ref{prob-R-H-factor} is from the class \(\chegakh{a}\)
  then the components belong to the classes \(\Dchegakh{a,\infty}\)
  and \(\Dchegakh{-\infty, a}\).
\end{rem}

With those definitions we can specify the classes of functions in
equation \eqref{eq:main}.  The given functions \(A(\alpha)-1\),
\(C(\alpha)-1\),
\(B(\alpha)-1\)
are required to be in \(\Dchegakh{a,b}\)
and \(A(\alpha)\),
\(C(\alpha)\),
\(B(\alpha)\)  to have zero index.  We also need that \(C(\alpha)\)
and \(B(\alpha)\)
have no zeroes on the strip---this corresponds to the determinant
of the original Wiener--Hopf matrix \eqref{eq:main} being non-zero.  We will also
require that \(A(\alpha)\) has no zeroes on the strip, this
corresponds to the sub-problem of \eqref{eq:main} (when \(L\) is very large) being
non-degenerate.   The ``forcing terms''
\(f_1(\alpha)\)
and \(f_2(\alpha)\)
should be in \(\Dchegakh{a,b}\).
We look for \(\Phi_-^{(0)}(\alpha) \)
and \(\Phi_-^{(L)}(\alpha) \)
in \(\Dchegakh{-\infty,b}\)
and \(\Psi_+^{ (0)}(\alpha)\)
and \(\Psi_+^{ (L)}(\alpha)\) in \(\Dchegakh{a,\infty}\).

\subsection{Properties of functions in \(\Dchegakh{a,b}\)}
\label{sec:Properties}
In this subsection we briefly describe some of the properties of
\(\Dchegakh{a,b}\) which will ensure that the solution we seek is in
the desired class. 
We will need the fact that if \(G\in\Dchegakh{a,b}\), \(F\in\Dchegakh{\alpha,\beta}\) (\(a<b\),
\(\alpha<\beta\))  and \(H(z)=F(z) G(z)\) then
\begin{equation*}
H(z) \in\Dchegakh{\text{max}(a, \alpha), \text{min}(b, \beta)},
\end{equation*}
i.e \(H(z)\)
is analytic in the respective strip. This is a simple but still
fundamental property for the rest of the derivation. Note, that a
similar statement does not hold for the class of \(L_2(\mathbb{R})\).

We will also use that if \(G-1\in\Dchegakh{a,b}\), \(\mathrm{ind}\,
G(t)=0\) and non-zero on the strip \(a\le \Im(\alpha)\le b\) then \(G^{-1}-1\in\Dchegakh{a,b}\).

Note that in the proposed method we only
solve scalar Wiener-Hopf equations of the similar form to the ones detailed
in~\cite{Ga-Che}*{\S~3.2}.  That is for a scalar equations on the real line
\begin{equation}
\label{eq:W-H3}
 K(t)\Phi_-(t)= \Phi_+(t)+ F(t).
 \end{equation}
we need that \(K\) is non-zero function, such that  \(K(t)-1\in
  \chegakh{0}\) and \(\mathrm{ind}\, K(t)=0\). Also require that \(F(t)\in
  \chegakh{0}\) then we will can find \(\Phi_+\) and \(\Phi_-\) in
  \((\Dchegakh{0,\infty}\) and \(\Dchegakh{-\infty, 0}\) on the real axis. We will need
  the extension of those result for the case when \eqref{eq:W-H3} holds
  on a strip \(a\le \Im(\alpha)\le b\). This has been obtained in \cite{MyStrip}.

\subsection{Motivation}
\label{sec:Motivation}

The initial motivation for this work came from the following acoustics
problem: investigate the effect of a finite poroelastic trailing edge on
noise production \cite{owl,Lorna}. The situation is modelled with a
plane-wave scattering by a rigid half-line plate \((-\infty,0)\)
and an poroelastic edge \((0,L)\)
with the poroelastic to rigid plate transition at \(x=0\).
The matrix Wiener--Hopf problem is obtained from consideration of the
Fourier transform with respect to the ends of the poroelastic plate at
\(x=0\)
and \(x=L\).
Define the half-range and full-range Fourier transform with respect to
\(x=0\):
\begin{align}
\Phi(\alpha, y)=& \int_{-\infty}^{0}\phi( \xi,y) e^{i \alpha \xi} d \xi
+\int^{\infty}_{0}\phi (\xi,y)  e^{i \alpha \xi} d \xi,\\
 =& \Phi^{(0)}_-( \alpha,y)+\Phi^{(0)}_+(\alpha,y).
\label{eqn:halfF}
\end{align}
In the case when there is only one transition point \(x=0\), the unknown
functions would be \(\Phi^{(0)}_-( \alpha,y)\),
\(\Phi^{(0)}_+(\alpha,y)\)
(or their derivatives) and we would only need to solve a scalar
Wiener--Hopf equation~\cite{bookWH}. Since there is a change of
boundary conditions at \(x=L\) as well,
we will also define the Fourier transforms with respect to the point
\(x=L\):
\begin{align}
\Phi^{(L)}( \alpha,y)=& \int_{-\infty}^{L}\phi ( \xi,y) e^{i \alpha (\xi-L)} d \xi
+\int^{\infty}_{L}\phi ( \xi, y)  e^{i \alpha (\xi-L)} d \xi,\\
 =& \Phi^{(L)}_-( \alpha,y)+\Phi_{+}^{(L)}(\alpha, y).
\end{align}
The relation between the transforms is:
\[ \Phi^{(L)} (\alpha,y)=\Phi ( \alpha,y) e^{i \alpha L}.\]
The next step is to write down the relationship between different
half-range transforms by using the boundary
conditions~\cite{daniele2014wiener}. These relations can be combined
to form a matrix Wiener--Hopf equation. The resulting matrix, which
motivated the present method, is (see \cite{my+Lorna} for a detailed
discussion):
\[\begin{pmatrix}
 \Phi_-^{(0)}(\alpha) \\
 \Phi_-^{(L)} (\alpha)
 \end{pmatrix}=-\begin{pmatrix}
  \frac{1-\gamma(\alpha)P}{\gamma(\alpha)}  &  Pe^{i \alpha L} \\
 \frac{1}{\gamma(\alpha)}e^{-i \alpha L} & 0
 \end{pmatrix}\begin{pmatrix}
 \Phi_+^{ '(0)}(\alpha) \\
 \Phi_+^{ '(L)}(\alpha) 
 \end{pmatrix}+ \begin{pmatrix}
 f_1(\alpha)\\
f_2 (\alpha)
\end{pmatrix},\]
where \(\gamma(\alpha)=\sqrt{\alpha^2-k^2_0}\),
\(k_0\)
is the acoustic wave number and, in the simplest case, \(P\)
is a constant. The exponentials are due to the boundary conditions
since the half-range Fourier transform with respect to different points
are needed. 

We note that on the left hand side the unknown functions are minus
half-transforms \(\Phi_-^{(0)}(\alpha)\)
and \(\Phi_-^{(L)} (\alpha)\),
and on the right are the derivatives of the plus half-transforms which
are in this paper denoted by \(\Psi_+^{(0)}(\alpha)\)
and \(\Psi_+^{ (L)}(\alpha)\). This problem is not treated in this
paper further since the purpose of this paper is to present the
iterative procedure in the simplest and general case in order to make
it easier to apply to many situations. For a detailed discussion of
this particular problem see \cite{my+Lorna}. The the later paper we
also discuss other examples, the diffraction with finite rigid plate,
leading to the following Wiener--Hopf equation.
\[\begin{pmatrix}
\Phi_-^{'(0)}(\alpha) \\
\Phi_-^{(L)} (\alpha)
\end{pmatrix}=-\begin{pmatrix}
  \gamma(\alpha) &  e^{i \alpha L} \\
-e^{-i \alpha L} & 0
\end{pmatrix}\begin{pmatrix}
\Phi_+^{ (0)}(\alpha) \\
\Phi_+^{ '(L)}(\alpha) 
\end{pmatrix}- \begin{pmatrix}
g_1(\alpha)\\
0
\end{pmatrix},\]
The advantage in considering the following example is that although
the exact matrix factorisation cannot be constructed, the results of
the new iterative procedure can be compared to the exact solution
obtained by other methods (Mathieu functions). 


\section{Iterative Wiener--Hopf Factorisation}
\label{sec:Iterative}

The most characteristic aspect of the Wiener--Hopf method is the
application of Liouville's theorem in order to obtain two separate
equations from one equation.  In order for Liouville's
theorem to be used two conditions have to be satisfied: the
analyticity and (at most) polynomial growth at infinity.  These two
conditions will be treated here in turn. First, a partial factorisation is
considered that has the exponential functions in the right place and
some of the required analyticity, that is:
\begin{align}
 \lefteqn{\begin{pmatrix}
\frac{-e^{-i \alpha L} }{B_-(\alpha)} &\frac{A(\alpha) }{C(\alpha)B_-(\alpha)}\\
 \frac{1 }{B_-(\alpha)}  & 0
 \end{pmatrix}\begin{pmatrix}
  \Phi_-^{(0)}(\alpha)\\
\Phi_-^{(L)} (\alpha)
 \end{pmatrix}}\nonumber \\ 
&\hspace*{2cm}=\begin{pmatrix}
  0  &   - B_+(\alpha) \\
\frac{ A(\alpha)}{B_-(\alpha) } &   B_+(\alpha)e^{i \alpha L}
 \end{pmatrix}\begin{pmatrix}
  \Psi_+^{ (0)}(\alpha) \\
 \Psi_+^{ (L)}(\alpha) 
 \end{pmatrix}+ \begin{pmatrix}
 f_3\\
f_4  \end{pmatrix},\label{eq:slit11} 
\end{align}
on a strip \(a\le \Im(\alpha)\le b\), where
\begin{displaymath}
  f_3=\frac{-e^{-i \alpha L} }{B_-(\alpha)}f_1+\frac{A(\alpha)
}{C(\alpha)B_-(\alpha)}f_2 \quad \text{and} \quad f_4=\frac{f_1
}{B_-(\alpha)}.
\end{displaymath}

 Note, that \(f_3\) and \(f_4\) are still in \(\Dchegakh{a,b}\).
There are two cases which would allow to solve the above equation exactly. The first case is
when   \(\frac{A(\alpha) }{C(\alpha)B_-(\alpha)}\) is in \(\Dchegakh{-\infty,b}\) and
\(\frac{A(\alpha)}{B_-(\alpha) } \) is in \(\Dchegakh{a,\infty}\). Then the matrix
factorisation has been already achieved in \ref{eq:slit11}. 
 In particular, this is true for matrices that have
the form:
\[
\begin{pmatrix}
 k B_-(\alpha) C_+(\alpha)&  B(\alpha)e^{i \alpha L} \\
 C(\alpha)e^{-i \alpha L} & 0
\end{pmatrix},\]
where \(k\)
is a constant. The second important case is when
\(\left(\frac{A(\alpha) }{C(\alpha)B_-(\alpha)}\right)^+\)
and \(\left(\frac{A(\alpha)}{B_-(\alpha) }\right)^- \)
are rational functions. Then the pole removal method 
\cite{bookWH}*{\S~4.4,~5.3} \cite{daniele2014wiener}*{\S~4.4.2} could
be employed to obtain the factorisation.

In the generic case we start with the
partial factorisation \eqref{eq:slit11} and then use the additive Wiener--Hopf
splitting. We present the detailed description now. The equation \eqref{eq:slit11} can be rearranged as
\begin{align}
 \lefteqn{\begin{pmatrix}
\frac{-e^{-i \alpha L} }{B_-(\alpha)} &\left( \frac{A(\alpha) }{C(\alpha)B_-(\alpha)}\right)^-\\
 \frac{1 }{B_-(\alpha)}  & 0
 \end{pmatrix}
\begin{pmatrix}
 \Phi_-^{(0)}(\alpha) \\
\Phi_-^{(L)}(\alpha)
 \end{pmatrix}+\begin{pmatrix}
 \left( \frac{A(\alpha) }{C(\alpha)B_-(\alpha)}\right)^+\Phi_-^{(L)}(\alpha) \\
 -\left(\frac{ A(\alpha)}{B_-(\alpha) }\right)^-  \Psi_+^{ (0)}(\alpha)
 \end{pmatrix}}\label{eq:slit1}\\ 
&\hspace*{2cm}=
\begin{pmatrix}
  0  &   - B_+(\alpha) \\
\left(\frac{ A(\alpha)}{B_-(\alpha) }\right)^+ &   B_+(\alpha)e^{i \alpha L}
 \end{pmatrix}
\begin{pmatrix}
 \Psi_+^{ (0)}(\alpha) \\
\Psi_+^{ (L)} (\alpha)
 \end{pmatrix}+
\begin{pmatrix}
 f_3\\
f_4 
\end{pmatrix}\label{eq:slit2}
,
\end{align}
on a strip \(a\le \Im(\alpha)\le b\).
Recall that the additive splitting is denoted \(F^\pm\) and the
multiplicative splitting by \(K_\pm\).
As the next step we make the additive splittings
of the second term of \eqref{eq:slit1}, which are possible since it is
in the class \(\Dchegakh{a,b}\)---in the same way as for the second term
of \eqref{eq:slit2}. Then, the  Liouville's theorem could be applied
because the exponential functions are in the correct place and all the
functions have desired the analyticity. Thus, we can apply the
Wiener--Hopf procedure as usual and the four equations (defined for
all \(\alpha\) in the complex plane) then become
\begin{align*}
\frac{-e^{-i \alpha L} }{B_-} \Phi_-^{(0)}+\left( \frac{A }{CB_-}\right)^-\Phi_-^{(L)}+\left(\left( \frac{A }{CB_-}\right)^+\Phi_-^{(L)}\right)^-- f_3^-=& 0,\\
- B_+\Psi_+^{ (L)}-\left(\left( \frac{A }{CB_-}\right)^+\Phi_-^{(L)}\right)^++ f_3^+=&0,\\
 \frac{1 }{B_-} \Phi_-^{(0)}- 
\left( \left(\frac{ A}{B_- }\right)^-  \Psi_+^{ (0)}\right)^--f_4^-=& 0,\\
\left(\frac{ A}{B_- }\right)^+ \Psi_+^{ (0)}+B_+ e^{i \alpha L}\Psi_+^{ (L)} +\left( \left(\frac{ A}{B_- }\right)^-  \Psi_+^{ (0)}\right)^-+f_4^+=&0.
\end{align*} 
Note that  Liouville's theorem is applied before any approximations are made.
The four equations can be rearranged:
\begin{align}
\left( \frac{A }{CB_-}\right)^-\Phi_-^{(L)}=&  f_3^--\left(\left( \frac{A }{CB_-}\right)^+\Phi_-^{(L)}\right)^- +\frac{e^{-i\alpha L}}{B_-} \Phi_-^{(0)}, \label{eq:eq3} \\ 
 B_+ \Psi_+^{ (L)}=&f_3^+-\left(\left( \frac{A }{CB_-}\right)^+\Phi_-^{(L)}\right)^+ \label{eq:eq1}, \\
\left(\frac{ A}{B_- }\right)^+\Psi_+^{ (0)}=& \left( \left(\frac{ A}{B_- }\right)^-  \Psi_+^{ (0)}\right)^++f_4^++B_+e^{i \alpha L}\Psi_+^{ (L)}, \label{eq:eq4}\\
\frac{\Phi_-^{(0)}}{B_-}=&\left( \left(\frac{ A}{B_- }\right)^-  \Psi_+^{
      (0)}\right)^-+f_4^-.
\label{eq:eq2}
 \end{align}
When the equations are written in this form  it is clear that
if \(\Phi_-^{(L)}\) is known then it could be used to calculate  
\(\Psi_+^{(L)}\) and this, in turn, produces \(\Psi_+^{ (0)}\) followed
by the calculation of
\(\Phi_-^{(0)}\) and then it loops round.  To avoid cumbersome
notations,  we will label coefficients in
(\ref{eq:eq3}--\ref{eq:eq2})  by \(K^{\pm}_i\) and obtain the following system:
\begin{align}
K_1^-\Phi_-^{(L)}=&  -\left(K_1^+\Phi_-^{(L)}\right)^-+f_3^- +K_4^+e^{-i
  \alpha L} \Phi_-^{(0)}, \label{eq:eq31} \\ 
K_2^+ \Psi_+^{ (L)}=& \left( K_1^+\Phi_-^{(L)}\right)^++f_3^+ \label{eq:eq11}, \\
K_3^+ \Psi_+^{ (0)}=&- \left( K_3^-  \Psi_+^{ (0)}\right)^++f_4^++K_2^+e^{i \alpha L}\Psi_+^{ (L)}, \label{eq:eq41}\\
K_4^-\Phi_-^{(0)}=&\left( K_3^-  \Psi_+^{ (0)}\right)^-+f_4^-.\label{eq:eq21}
 \end{align}
Using equations~\eqref{eq:eq11} and ~\eqref{eq:eq21}, we eliminate
\(\Psi_+^{ (L)}\) and \(\Phi_-^{(0)}\) from \eqref{eq:eq31} and
\eqref{eq:eq41} respectively:
\begin{align}
 K_1^-\Phi_-^{(L)}=&  f_3^--\left(K_1^+\Phi_-^{(L)}\right)^- +e^{-i
  \alpha L}\left(\left( K_3^-  \Psi_+^{ (0)}\right)^-+f_4^-\right) , \label{eq:eq32} \\
K_3^+\Psi_+^{ (0)}=& f_4^+-\left( K_3^-  \Psi_+^{ (0)}\right)^++e^{i \alpha L}\left( \left( K_1^+\Phi_-^{(L)}\right)^++f_3^+\right). \label{eq:eq42}
 \end{align}
Functions \(\Psi_+^{ (L)}\) and \(\Phi_-^{(0)}\) can be found
from~\eqref{eq:eq11} and ~\eqref{eq:eq21} once \eqref{eq:eq32} and
\eqref{eq:eq42} will be solved, where \(K_1=\frac{A }{CB_-}\) and
\(K_3=\frac{A }{B_-}\) in the original notation.

So far the equations are exact, but in order to make progress an
approximation will be used now.  In order to solve approximately we
will describe an iterative procedure, where the \(n\)-th
iteration is denoted by \(\Phi_-^{(L)n}\)
and \(\Psi_+^{ (0)n}\).
If \(\Phi_-^{(L)n}\)
is known it could be substituted into \eqref{eq:eq42} to calculate
\(\Psi_+^{ (0)n+1}\)
and then the function \(\Psi_+^{ (0)n+1}\)
can be used in \eqref{eq:eq32} to find \(\Phi_-^{(L)n+1}\)
and so on. 

Hence, for this iterative procedure it is enough to choose
an initial value of \(\Phi_-^{(L)0}\).
Since equation \eqref{eq:eq32} can be considered on a horizontal line
\(\Im(\alpha)=a<0\),
on that line the term with \(e^{-i\alpha L}\)
will be small (especially for \(L\)
large). This justifies neglecting the term with \(e^{-i\alpha L}\)
as a first approximation. Hence to find \(\Phi_-^{(L)0}\)
the aim is to solve
 \begin{equation}
\label{eq:sca1}
K_1^-\Phi_-^{(L)0}=  f_3^--\left(K_1^+\Phi_-^{(L)0}\right)^-.
 \end{equation}
 The above equation can be rearranged as a scalar Wiener--Hopf
 equation in the following manner
\begin{equation}
  \label{eq:first-approx}
  \left(K_1\Phi_-^{(L)0}\right)^-=  f_3^-.
 \end{equation}
Introduce an unknown function \(D^+\) defined by
\begin{equation}
  \label{eq:d-plus-def}
  D^+=\left(K_1\Phi_-^{(L)0}\right)^+.
\end{equation}
Then, combining \eqref{eq:first-approx} and \eqref{eq:d-plus-def} we
obtain a Wiener--Hopf equation:
\begin{equation}
\label{eq:W-H1}
 K_1\Phi_-^{(L)0}= D^++ f_3^-.
 \end{equation}
This equation has the form discussed in Section~\ref{sec:Properties}
so we will have \(\Phi_-^{(L)0}\in~\Dchegakh{-\infty, a}\) as desired.
The solution of this equation is
\begin{equation*}
\Phi_-^{(L)0}= \frac{1}{(K_{1})_-}\left(\frac{f_3^-}{(K_1)_+}\right)^-.
\end{equation*}
This can be taken as the initial approximation of the solution, which is used
in our iterative procedure. 

To compute the next step we will need to solve~\eqref{eq:eq42} for
\(\Psi_+^{ (0)1}\)
on a horizontal line \(\Im(\alpha)=b>0\).
Note that \(\Phi_-^{(L)1}\)
is defined on this line and can be evaluated numerically. Exactly in
the same manner as \eqref{eq:sca1}, the solution of
\begin{equation*}
K_3^+\Psi_+^{ (0)1}= \left(-\left( K_3^-  \Psi_+^{ (0)1}\right)^++f_4^+\right)+e^{i \alpha L}\left( \left( K_1^+\Phi_-^{(L)0}\right)^++f_3^+\right),
\end{equation*}
will lead to the scalar Wiener--Hopf equation now on the line
\(\Im(\alpha)=b\). It can be found that
\begin{equation}
  \label{eq:iteration-psi}
\Psi_+^{ (0)1}= \frac{1}{(K_{3})_+}\left(\frac{f_4^++e^{i \alpha L}\left( \left( K_1^+\Phi_-^{(L)0}\right)^++f_3^+\right)}{(K_3)_-}\right)^+.
\end{equation}
It is easy to see that this formula holds for all iterations with a
trivial change of \(\Psi_+^{ (0)1}\) to \(\Psi_+^{ (0)n}\) and
\(\Phi_-^{(L)0}\) to \(\Phi_-^{(L)n-1}\). Similarly, the general
recurrence formula for \(\Phi_-^{(L)n}\) is 
\begin{equation}
\label{eq:iteration-phi}
\Phi_-^{(L)n}= \frac{1}{(K_{1})_-}\left(\frac{f_3^- +e^{-i
  \alpha L}\left(\left( K_3^-  \Psi_+^{ (0)n}\right)^-+f_4^-\right) }{(K_1)_+}\right)^-.
\end{equation}

The convergence of this procedure is examined in the next
section. Numerical examples of this procedure are given in Section
\ref{sec:ex} and are compared to exact solutions (which are known for
these special cases).

\section{Convergence of the method}
\label{sec:Convergence}
The convergence of  iterations relies on consideration of
 equations~\eqref{eq:eq32} and \eqref{eq:eq42} on
different lines \(\Im(\alpha)=a<0\) and \(\Im(\alpha)=b>0\) within the
strip of analyticity \(a\le\Im(a)\le b\), in a similar way as
it was done in \cite{MyStrip}. We will employ the following notation
\begin{equation*}
D_n^+=\left(K_1^+(\Phi_-^{(L)n}-\Phi_-^{(L)n-1})\right)^+, \quad E_n^-=\left( K_3^- ( \Psi_+^{(0)n}-\Psi_+^{(0)n-1})\right)^-.
\end{equation*}
From \eqref{eq:iteration-psi}--\eqref{eq:iteration-phi}, the
difference of the values of the function after \(n+1\) and \(n\) 
times is
\begin{equation}
\label{eq:err1}
\Phi_-^{(L)n+1}-\Phi_-^{(L)n}= \frac{1}{(K_1)_-}\left(\frac{e^{-i
  \alpha L}E_{n+1}^-  }{(K_1)_+}\right)^-,
\end{equation}
and
\begin{equation}
\label{eq:err2}
\Psi_+^{(0)n+1}-\Psi_+^{(0)n}= \frac{1}{(K_3)_+}\left(\frac{e^{i
  \alpha L} D_n^+ }{(K_3)_-}\right)^+.
\end{equation}
Note that the forcing terms \(f_i^\pm\) do not influence the convergence.
In order to estimate the differences~\eqref{eq:err1} and~\eqref{eq:err2} in magnitude  
we need some inequalities for the Wiener--Hopf additive
decomposition. This has been addressed in \cite{My1} for all \(L_p\)
spaces, here we will need a very special case of that result which has a particularly simple form. We will use that if \( F(t)=F^{+}(t)+F^-(t),\) as in Theorem \ref{prob:jump-probl}, then
\begin{equation}
\label{eq:bound}
\lVert F^{\pm}\rVert_2\le\lVert F\rVert_2.
\end{equation}
 The same inequality can be observed from the fact that the map \(F
 \to F_\pm\) in \(L_2(\mathbb{R})\) is the Szeg\"o orthoprojector with the norm \(1\).
Since we will be looking at the \(L_2\) norm on different lines within the
strip of analyticity, the \(L_2\) norm of a function \(f(x+ia)\) on the line
\(\Im(\alpha)=a\) will be denoted by \[\lVert
f\rVert_2^a:=\left(\int\limits_{-\infty}^{\infty} |f(x+ia)|^2\,dx\right)^{1/2}.\]

As well as employing \eqref{eq:err1} to assert convergence we will need to
derive a relationship for \(E_{n+1}^-\) and \(E_{n}^-\). This is done
by a similar procedure used to derive \eqref{eq:err1} but considering
the other unknown in the scalar Wiener-Hopf equation. In other words
 the expression of the unknown plus and minus function are
 obtained. This is done explicitly below. 
Write \eqref{eq:eq32} and \eqref{eq:eq42} as they are used on \(n\)-th iteration:
\begin{align}
 K_1^-\Phi_-^{(L)n}=&  f_3^--\left(K_1^+\Phi_-^{(L)n}\right)^- +e^{-i
  \alpha L}\left(\left( K_3^-  \Psi_+^{ (0)n}\right)^-+f_4^-\right) , \label{eq:eq33} \\
K_3^+\Psi_+^{ (0)n}=& f_4^+-\left( K_3^-  \Psi_+^{ (0)n}\right)^++e^{i \alpha L}\left( \left( K_1^+\Phi_-^{(L)n-1}\right)^++f_3^+\right). \label{eq:eq43}
 \end{align}
We can add \(K_1^+\Phi_-^{(L)n}\) to both sides of \eqref{eq:eq33} and
\( K_3^-  \Psi_+^{ (0)n}\) to both sides of \eqref{eq:eq43} to obtain
\begin{align}
 K_1\Phi_-^{(L)n}=&  f_3^-+\left(K_1^+\Phi_-^{(L)n}\right)^+ +e^{-i
  \alpha L}\left(\left( K_3^-  \Psi_+^{ (0)n}\right)^-+f_4^-\right) , \label{eq:eq34} \\
K_3\Psi_+^{ (0)n}=& f_4^++\left( K_3^-  \Psi_+^{ (0)n}\right)^-+e^{i \alpha L}\left( \left( K_1^+\Phi_-^{(L)n-1}\right)^++f_3^+\right). \label{eq:eq44}
 \end{align}
And if we consider the difference between the consecutive iterations
we derive
\begin{align}
 K_1(\Phi_-^{(L)n}-\Phi_-^{(L)n-1})=& D_n^+  +e^{-i
  \alpha L}E_n^-, \label{eq:eq34} \\
K_3( \Psi_+^{(0)n+1}-\Psi_+^{(0)n})=& E_{n+1}^-+e^{i \alpha L}D_n^+ . \label{eq:eq44}
 \end{align}
This makes the coupling between the equations explicit. Note that
\begin{equation*}
D_n^+=(K_1)_+\left(\frac{e^{-i
  \alpha L}E_n^-}{(K_1)_+}\right)^+, \quad E_{n+1}^-=(K_3)_-\left(\frac{e^{i
  \alpha L}D_n^+}{(K_3)_-}\right)^-,
\end{equation*}
where \(D_n^+\) can be derived by solving the scalar Wiener--Hopf equation
\eqref{eq:eq34} with the unknown minus function
\(\Phi_-^{(L)n}-\Phi_-^{(L)n-1}\), unknown plus function  \(D_n^+\)
and \(e^{-i
  \alpha L}E_n^-\) is assumed to be known from the previous
iterations. Similarly \(E_{n+1}^-\) is obtained by solving the  scalar
Wiener--Hopf equation \eqref{eq:eq44}.

We will obtain estimates of the size of \(\lVert D_n^+\rVert_2^a\).
 Let \(\max_{x \in
  \mathbb{R}}|(K_1)_+(x+ai)|=d_1\) and \(\max_{x \in
  \mathbb{R}}|(K_1)_+^{-1}(x+ai)e^{-i L(x+ai)}|=\epsilon_1\). Note that \(K_1\) is bounded since
it is in \(L_2(\mathbb{R}) \cap \text{ H\"{o}lder}\) and note
that \(K_1\) is non-zero. Then using \eqref{eq:bound} we have
\begin{equation}
\label{eq:line1}
\lVert D_n^+\rVert_2^a\le d_1\left\lVert\left(\frac{e^{-i
  \alpha L}E_n^-}{(K_1)_+}\right)^+\right\rVert_2^a\le d_1\left\lVert\frac{e^{-i
  \alpha L}E_n^-}{(K_1)_+}\right\rVert_2^a \le d_1 \epsilon_1 \lVert E_n^-\rVert_2^a.
\end{equation}
Similarly, defining \(\max_{x \in \mathbb{R}}(K_3)_-(x+bi)=d_2\)
and \(\max_{x \in \mathbb{R}}e^{i
  L(x+bi)}(K_3)_-^{-1}(x+bi)=\epsilon_2\), we obtain
\begin{equation}
\label{eq:line2}
\lVert E_{n+1}^- \rVert_2^b\le d_2\left\lVert \left(\frac{e^{i
  \alpha L}D_n^+}{(K_3)_-}\right)^- \right\rVert_2^b\le d_2\left\lVert\left(\frac{e^{i
  \alpha L}D_n^+}{(K_3)_-}\right)\right\rVert_2^a \le d_2 \epsilon_2
\lVert D_n^+ \rVert_2^a.
\end{equation}
Next, we  note that the following is true
\begin{equation}
\label{eq:line}
\lVert E_{n+1}^- \rVert_2^a \le \lVert E_{n+1}^- \rVert_2^b, \qquad
\lVert D_n^+\rVert_2^b \le \lVert D_n^+\rVert_2^a.
\end{equation}
 This is intuitively clear since \(E_{n+1}^-\) is further from
 singularities on the line \(\Im(\alpha)=a\) than on \(\Im(\alpha)=b\)
 and the other way round for \(D_n^+\). The inequalities follows from the Poisson formula  for the real line~\cite{Nikolski}*{Cor~6.4.1}
 and H\"{o}lder inequality.
Combining \eqref{eq:line} with inequalities \eqref{eq:line1} and \eqref{eq:line2} we obtain the key result
for demonstrating convergence:
\begin{equation}
\label{eq:liner}
\lVert E_{n+1}^- \rVert_2^a \le d_1  d_2 \epsilon_2\epsilon_1 \lVert E_{n}^- \rVert_2^a, \qquad
\lVert D_{n+1}^+\rVert_2^b \le d_1  d_2 \epsilon_2\epsilon_1 \lVert D_n^+\rVert_2^a.
\end{equation}
The convergence of the procedure is shown in the next theorem.
\begin{thm} For sufficiently large \(L\) there exists a constant \(q< 1\) such that
\begin{equation*}
\lVert\Phi_-^{(L)n+1}-\Phi_-^{(L)n}\rVert_2^a=q\lVert\Phi_-^{(L)n}-\Phi_-^{(L)n-1}\rVert_2^a,
\end{equation*}
for all \(n\) and, hence, the error at the \(n\)-th iteration is 
\begin{equation*}
\lVert\Phi_-^{(L)n}-\Phi_-^{(L)}\rVert_2^a =\frac{q^n}{1-q}\lVert \Phi_-^{(L)0}-\Phi_-^{(L)1}\rVert_2^a.
\end{equation*}
Analogous statements are true for \( \Psi_+^{(0)n}\).
\end{thm}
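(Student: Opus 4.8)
The plan is to take the contraction \eqref{eq:liner} as the engine of the argument. It already asserts $\lVert E_{n+1}^-\rVert_2^a\le q\,\lVert E_n^-\rVert_2^a$ with $q=d_1 d_2\epsilon_1\epsilon_2$, so once I show $q<1$ for large $L$ the sequence $\{E_n^-\}$ decays geometrically, and the decay of the increments $\Phi_-^{(L)n+1}-\Phi_-^{(L)n}$ follows by transferring this bound through \eqref{eq:err1}. Accordingly there are three things to do: (i) verify $q<1$ when $L$ is large; (ii) convert the decay of $E_n^-$ into a contraction for the increments of $\Phi_-^{(L)n}$; and (iii) sum the resulting geometric series to obtain the error estimate, together with the existence of the limit $\Phi_-^{(L)}$.

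For (i) I would isolate the exponential dependence inside $\epsilon_1$ and $\epsilon_2$. On $\Im(\alpha)=a$ one has $|e^{-iL(x+ai)}|=e^{La}$ and on $\Im(\alpha)=b$ one has $|e^{iL(x+bi)}|=e^{-Lb}$, both independent of $x$, so that $\epsilon_1=e^{La}\max_x|(K_1)_+^{-1}(x+ai)|$ and $\epsilon_2=e^{-Lb}\max_x|(K_3)_-^{-1}(x+bi)|$. Hence $\epsilon_1\epsilon_2=e^{L(a-b)}\,M$, where $M$ collects the two maxima and, together with $d_1$ and $d_2$, is finite and independent of $L$: indeed $K_1$ and $K_3$ lie in $L_2(\mathbb{R})\cap\text{H\"older}$ and are non-zero on the strip, so by the properties recalled in Section~\ref{sec:Properties} their multiplicative factors $(K_1)_\pm,(K_3)_\pm$ and their reciprocals are bounded on each horizontal line. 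Since $a<b$ by assumption, $e^{L(a-b)}\to 0$, so $q=d_1 d_2 M\,e^{L(a-b)}\to 0$ as $L\to\infty$; in particular $q<1$ for all sufficiently large $L$, precisely the regime in which neglecting the exponential term was justified.

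For (ii) I would apply \eqref{eq:err1} on $\Im(\alpha)=a$ and estimate as in \eqref{eq:line1}: pulling out $\max_x|(K_1)_-^{-1}(x+ai)|$, using the orthoprojector bound \eqref{eq:bound}, and absorbing the exponential into $\epsilon_1$ gives $\lVert\Phi_-^{(L)n+1}-\Phi_-^{(L)n}\rVert_2^a\le C\,\lVert E_{n+1}^-\rVert_2^a$ for a constant $C$ independent of $n$. Tracing the coupling in the other direction, $\Phi_-^{(L)n}-\Phi_-^{(L)n-1}\mapsto D_n^+\mapsto E_{n+1}^-$, and applying \eqref{eq:line2} and \eqref{eq:line}, bounds $\lVert E_{n+1}^-\rVert_2^a$ by a bounded multiple of $\epsilon_1\epsilon_2\,\lVert\Phi_-^{(L)n}-\Phi_-^{(L)n-1}\rVert_2^a$. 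Combining the two estimates yields $\lVert\Phi_-^{(L)n+1}-\Phi_-^{(L)n}\rVert_2^a\le q\,\lVert\Phi_-^{(L)n}-\Phi_-^{(L)n-1}\rVert_2^a$, where the contraction constant is again of the form (bounded factors)$\,\times e^{L(a-b)}$ and so is $<1$ for large $L$. Iterating gives $\lVert\Phi_-^{(L)n+1}-\Phi_-^{(L)n}\rVert_2^a\le q^{n}\,\lVert\Phi_-^{(L)1}-\Phi_-^{(L)0}\rVert_2^a$.

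For (iii), since $q<1$ the increments are summable, so $\{\Phi_-^{(L)n}\}$ is Cauchy in the $\lVert\cdot\rVert_2^a$ norm and converges to a limit $\Phi_-^{(L)}$, which solves the exact coupled system \eqref{eq:eq32}--\eqref{eq:eq42} by passing to the limit there. Writing $\Phi_-^{(L)}-\Phi_-^{(L)n}=\sum_{k\ge n}(\Phi_-^{(L)k+1}-\Phi_-^{(L)k})$ and using the triangle inequality with the geometric bound gives $\lVert\Phi_-^{(L)}-\Phi_-^{(L)n}\rVert_2^a\le\frac{q^n}{1-q}\lVert\Phi_-^{(L)0}-\Phi_-^{(L)1}\rVert_2^a$, the stated estimate. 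The assertion for $\Psi_+^{(0)n}$ is identical after interchanging the two lines, using \eqref{eq:err2}, the second inequality in \eqref{eq:liner}, and the decay of $D_n^+$ in place of $E_n^-$. I expect the main obstacle to be the careful bookkeeping in step (ii)—keeping track of whether each norm is measured on $\Im(\alpha)=a$ or $\Im(\alpha)=b$ and invoking the cross-line comparison \eqref{eq:line} in the correct direction—so that every auxiliary constant stays bounded uniformly in $L$ and the factor $e^{L(a-b)}$ genuinely drives the contraction.
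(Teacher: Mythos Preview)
Your proposal is correct and follows essentially the same route as the paper: bound the increments of $\Phi_-^{(L)n}$ via \eqref{eq:err1} and \eqref{eq:bound}, feed in the geometric decay \eqref{eq:liner} of $\lVert E_n^-\rVert_2^a$ with $q=d_1d_2\epsilon_1\epsilon_2$, observe that $\epsilon_1\epsilon_2$ carries the factor $e^{L(a-b)}$ so $q<1$ for large $L$, and finish by summing the geometric series. Your step~(ii), where you close the loop by bounding $\lVert E_{n+1}^-\rVert_2^a$ back in terms of $\lVert\Phi_-^{(L)n}-\Phi_-^{(L)n-1}\rVert_2^a$ through the chain $\Phi\mapsto D_n^+\mapsto E_{n+1}^-$, is in fact a little more explicit than the paper's own passage at that point.
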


\begin{proof}
First consider \eqref{eq:err1} on the line \(x+ai\). Let \(\max_{x \in
  \mathbb{R}}|(K_1)_-^{-1}(x+ai)|=c_1\), note that \(K_1\) is non-zero on the strip so
the constant is well defined. Then, using \eqref{eq:bound}, we have
\begin{equation}
\lVert\Phi_-^{(L)n+1}-\Phi_-^{(L)n}\rVert_2^a \le c_1 \left\lVert \left(\frac{e^{-i
  \alpha L}E_{n+1}^-}{(K_1)_+}\right)^-\right\rVert_2^a \le c_1\epsilon_1\lVert E_{n+1}^- \rVert_2^a. 
\end{equation}
This implies that 
\begin{equation}
\lVert\Phi_-^{(L)n+1}-\Phi_-^{(L)n}\rVert_2^a \le \frac{\lVert E_{n+1}^- \rVert_2^a }{\lVert E_{n}^- \rVert_2^a}\lVert\Phi_-^{(L)n}-\Phi_-^{(L)n-1}\rVert_2^a.
\end{equation}
Hence we obtained the desired result with \(q=d_1  d_2
\epsilon_2\epsilon_1\) using \eqref{eq:liner}. The contraction mapping theorem can be applied if
\(q<1\) to show that the iterations converge to the exact solution
\(\Phi_-^{(L)}\).  Note that we can make \(\epsilon_1\) and
\(\epsilon_2\) (and hence \(q\)) arbitrarily small by taking \(L\) sufficiently large.

Similarly, we consider \eqref{eq:err2} on the line \(x+bi\). 
In the same manner we derive
\begin{equation}
\lVert \Psi_+^{(0)n+1}-\Psi_+^{(0)n}\rVert_2^b\le q \lVert \Psi_+^{(0)n}-\Psi_+^{(0)n-1}\rVert_2^b.
\end{equation}
 \end{proof}

Note that the procedure converges for all possible initial functions. But the accuracy of the initial step determines how fast
the small desired error will be achieved. 

\section{Examples}
\label{sec:ex}

In this section the method proposed in this paper will be illustrated
numerically. Two examples will be considered, in both cases the exact
solutions are known and will be compared with the outcomes of the
iterative procedure. The first example is of the type \eqref{eq:main}
and is chosen to be
 as simple as possible. 
The second example has been studied by other researchers in connection to
integral equations. The resulting Wiener--Hopf system is more general than
\eqref{eq:main}, but it can also be reduced to solving equations
similar to
\eqref{eq:eq32} and \eqref{eq:eq42} and hence the derivations in this
paper apply. A more involved examples of application of the proposed
method is found in \cite{my+Lorna} and is used in the setting of
acoustics, see Section \ref{sec:Motivation} for more details. Note
that in the latter case no exact solution is known.
\begin{ex}
\begin{figure}[htbp]
\centering
\includegraphics[scale=0.63,angle=0]{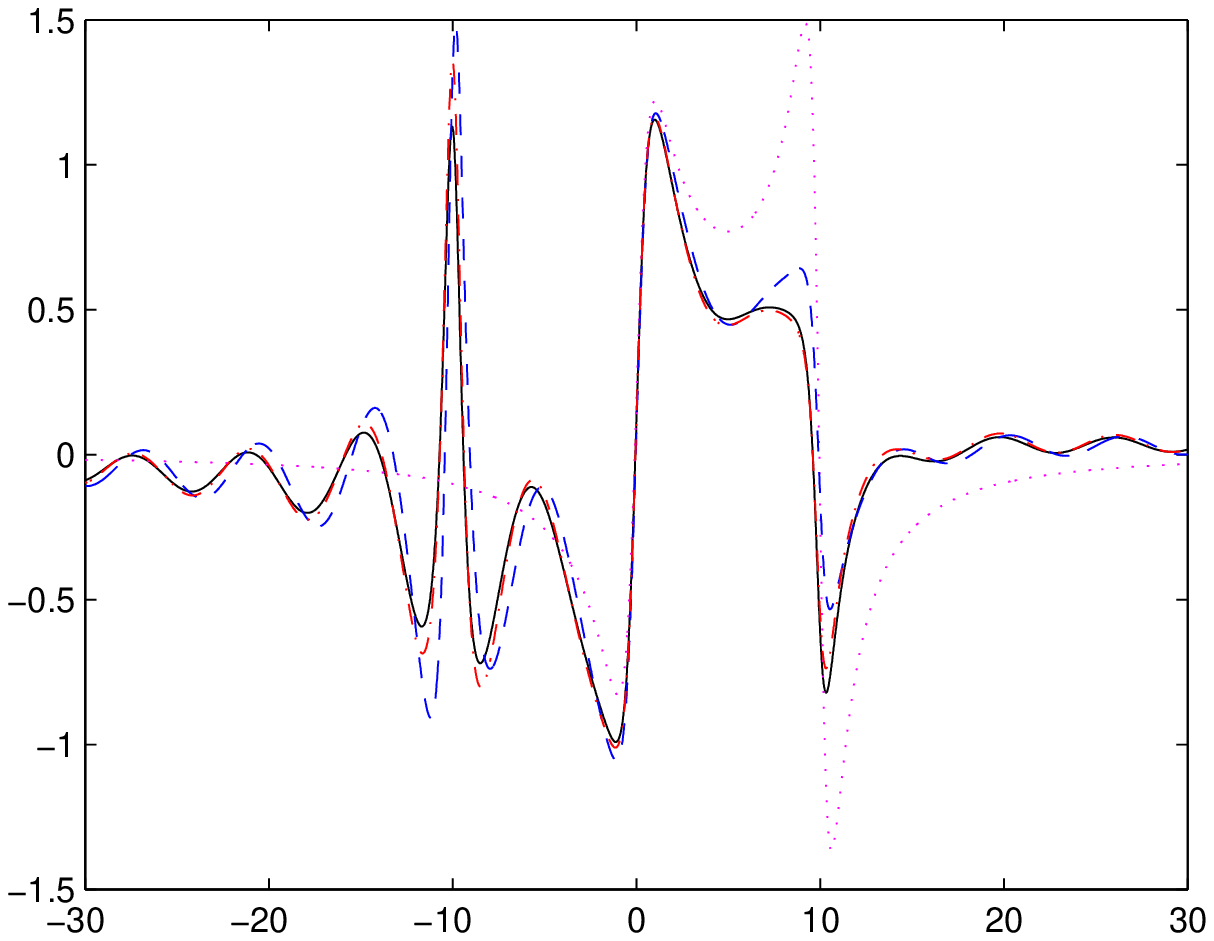}
\includegraphics[scale=0.63,angle=0]{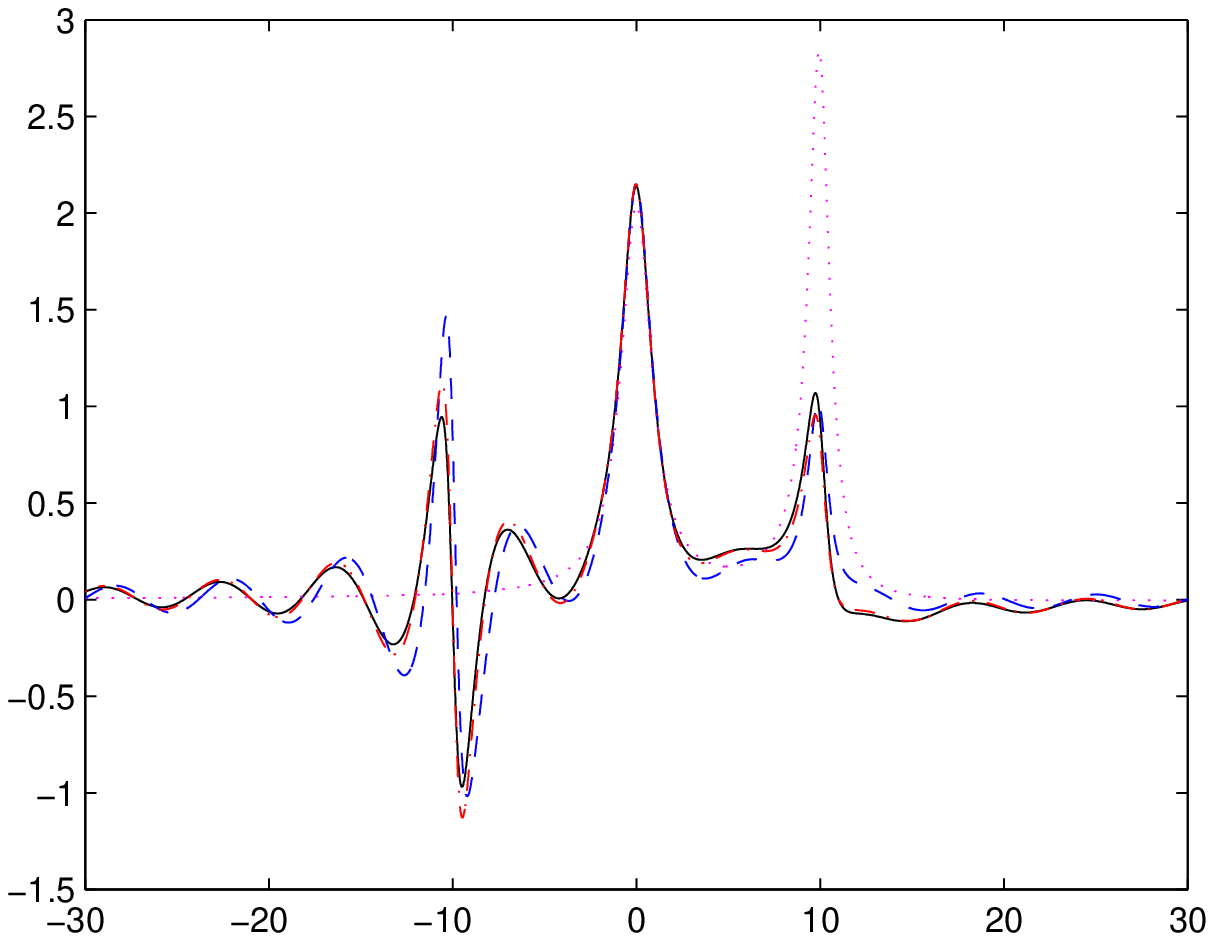}
\includegraphics[scale=0.63,angle=0]{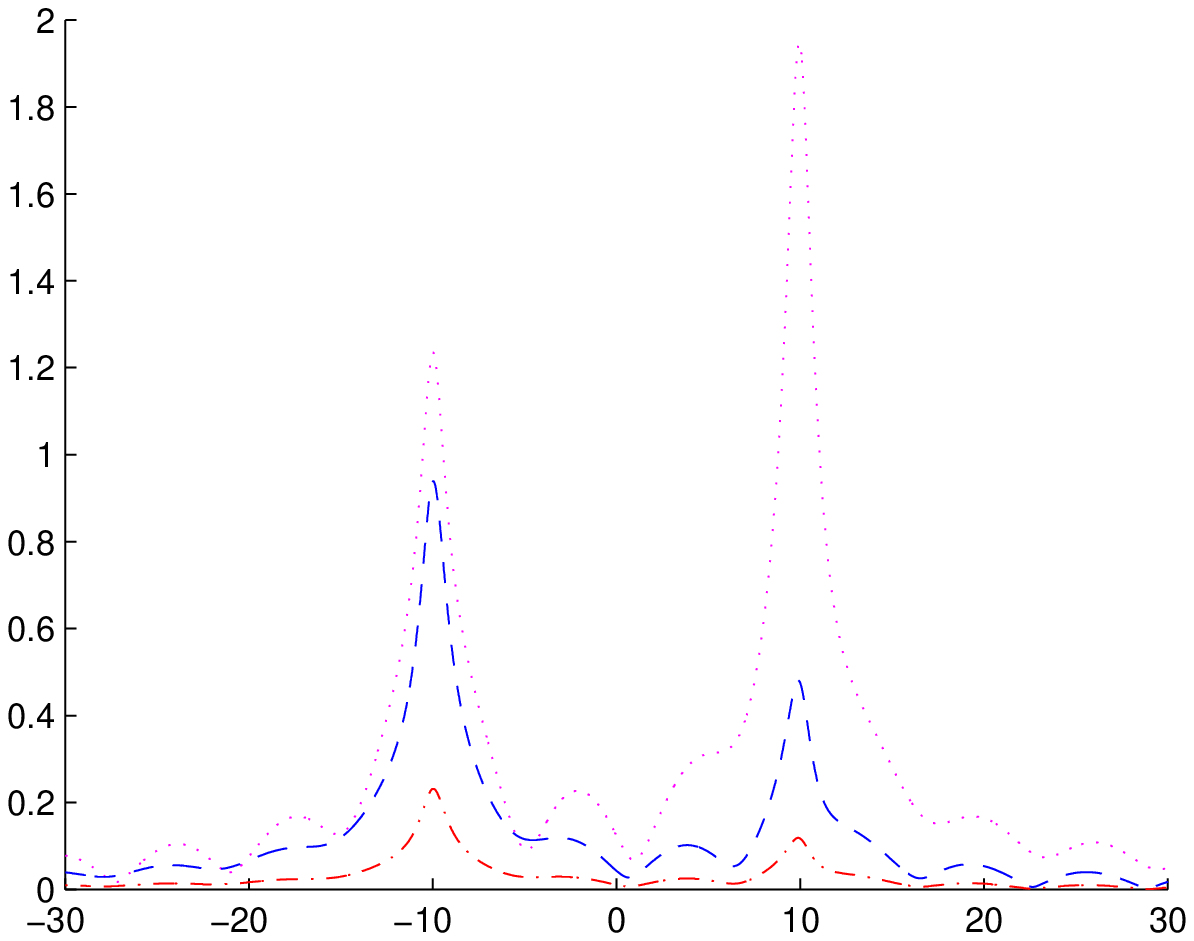}
\caption{Showing the real part (top figure) and imaginary part
  (middle figure) of  $\Phi_-^{(L)}$ (solid black line),
  $\Phi_-^{(L)0}$ (dotted line), $\Phi_-^{(L)1}$ (blue dashed line)
  and $\Phi_-^{(L)2}$ (red dashed line). The bottom figure shows the
  decrease in the
  absolute value of $\Phi_-^{(L)}-\Phi_-^{(L)n}$. The parameters are $\lambda
=0.7+10i $ and $L=1$.}
\label{fig:3}
 \end{figure}
\begin{figure}[htbp]
\centering
\includegraphics[scale=0.47,angle=0]{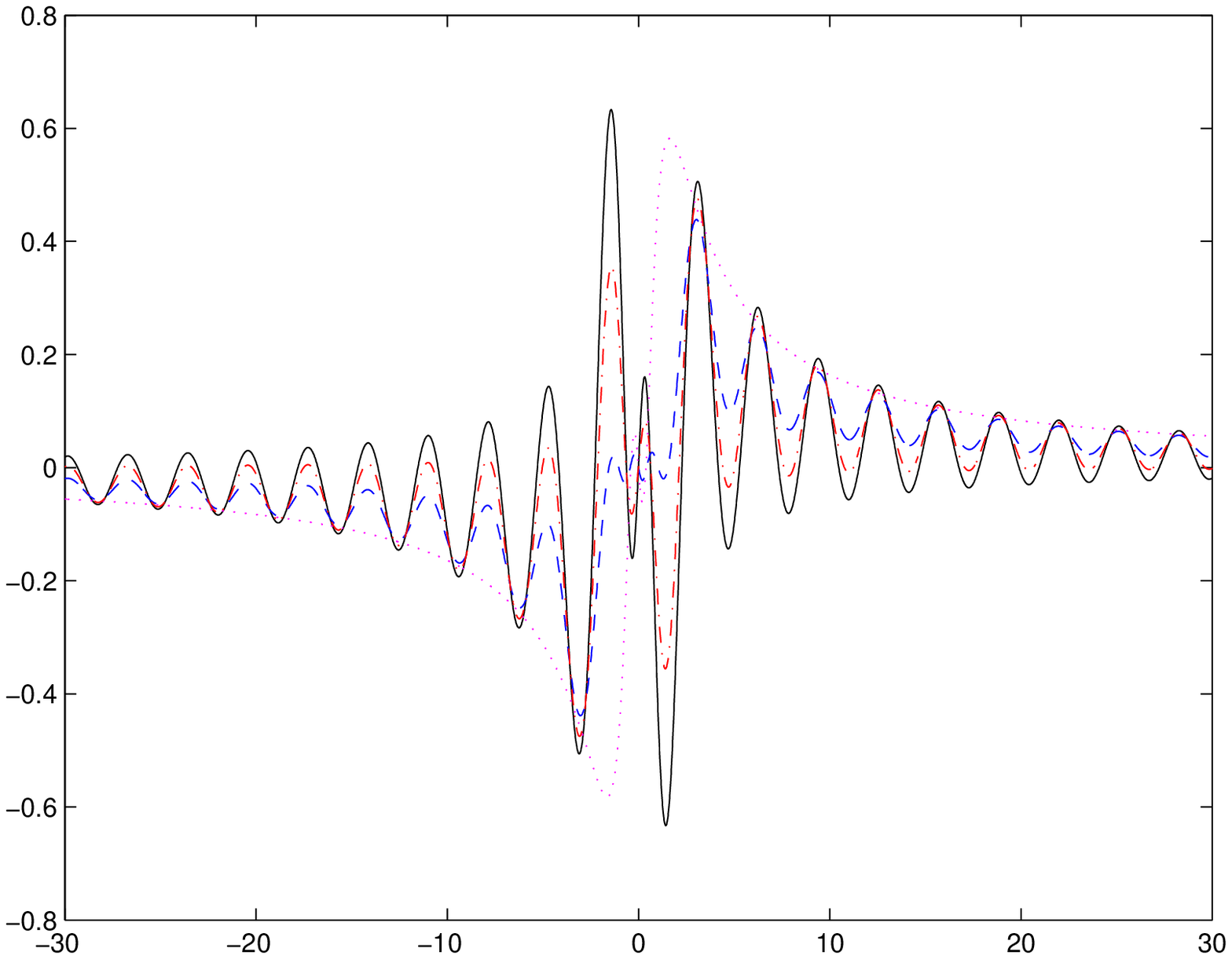}
\includegraphics[scale=0.47,angle=0]{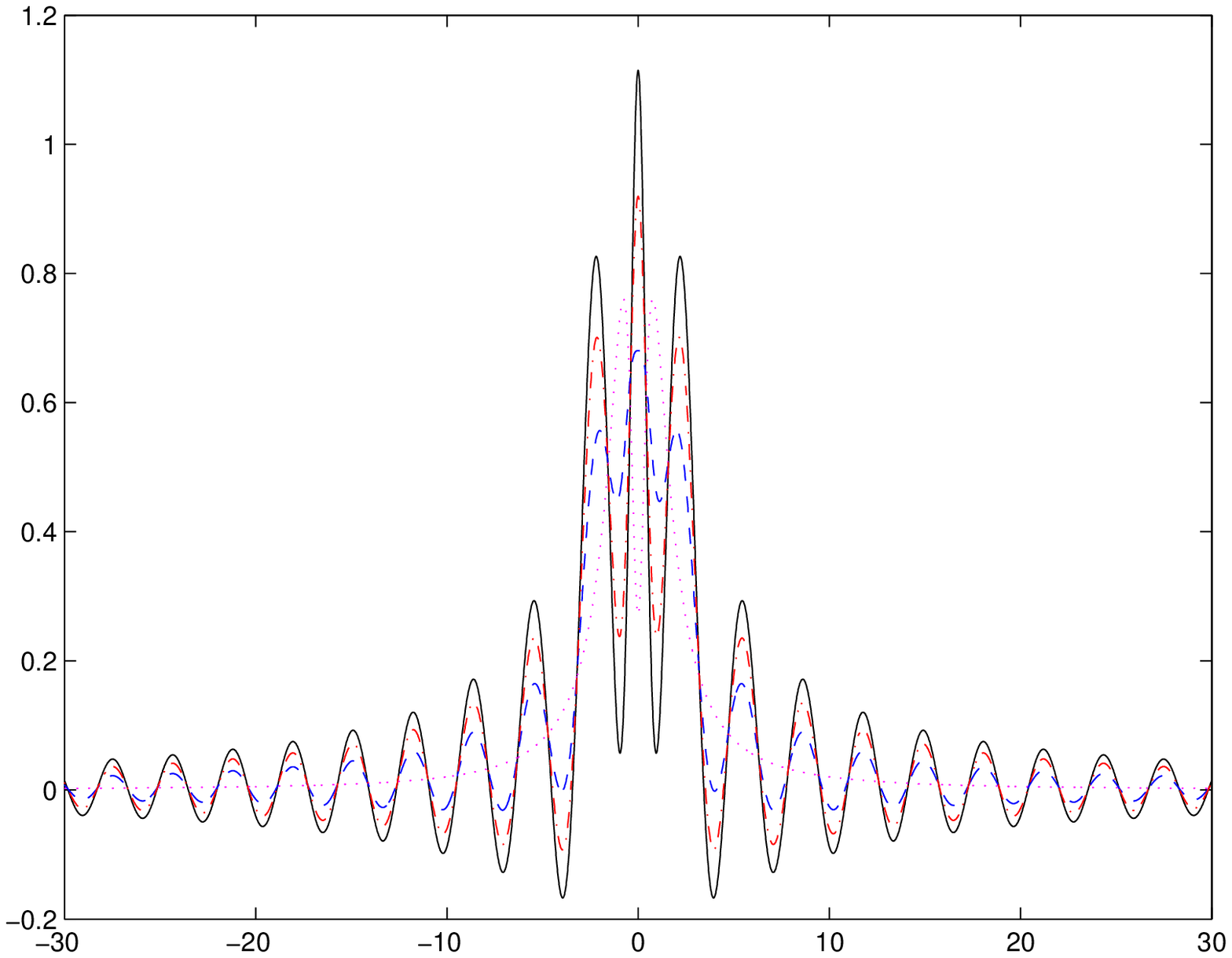}
\includegraphics[scale=0.47,angle=0]{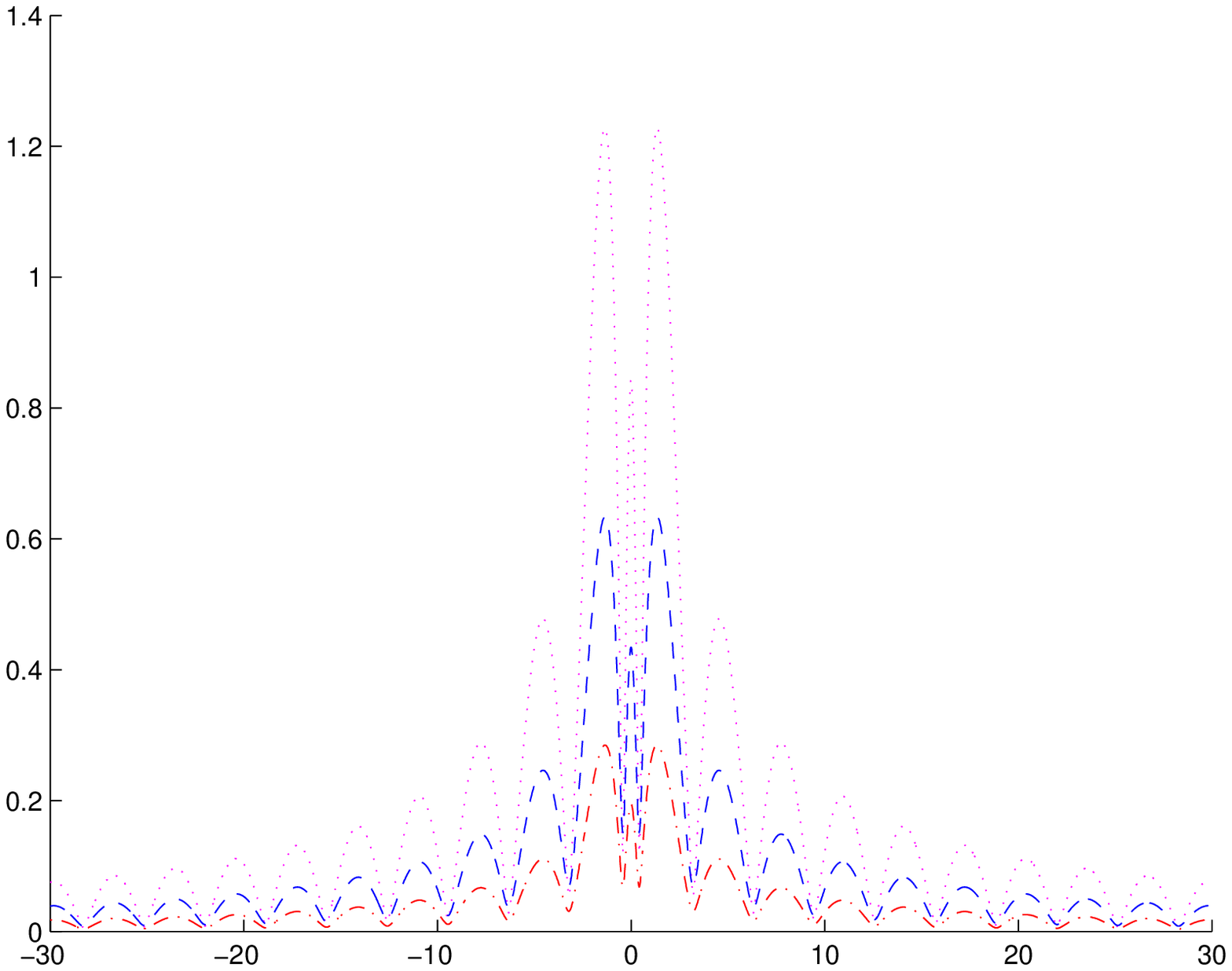}
\caption{Showing the real part (top figure) and imaginary part
  (middle figure) of  $\Phi_-^{(L)}$ (solid black line),
  $\Phi_-^{(L)0}$ (dotted line), $\Phi_-^{(L)1}$ (blue dashed line)
  and $\Phi_-^{(L)2}$ (red dashed line). The bottom figure shows the
  decrease in the
  absolute value of $\Phi_-^{(L)}-\Phi_-^{(L)n}$. The parameters are $\lambda
=0.2 $ and $L=2$.}
\label{fig:2}
 \end{figure}

The first numerical example will be the simplest possible in order to
illustrate the theory. Consider \eqref{eq:main} with
\begin{equation*}
\begin{pmatrix}
 \frac{0.5}{(\alpha-i\lambda)(\alpha+i\lambda)}+1 &  B_+(\alpha)e^{i \alpha L} \\
 e^{-i \alpha L} & 0
 \end{pmatrix},
\end{equation*}
where \(\lambda\) is a complex parameter and  \(B_+(\alpha)\) is an
arbitrary function satisfying the conditions stated in Section~\ref{sec:1}. For the forcing terms take
\begin{equation*}
f_4^-(\alpha)=f_3^-(\alpha)=\frac{1}{\alpha-i}, \quad \text{and} \quad f_4^+(\alpha)=f_3^+(\alpha)=\frac{1}{\alpha+2i}.
\end{equation*}
In this example \(a=b=\Re (\lambda)\) and so we
require \(\Re (\lambda)>0\).  We have  that
\begin{equation*}
K_3(\alpha)=K_1(\alpha)=\frac{0.5}{(\alpha-i\lambda)(\alpha+i\lambda)}+1.
\end{equation*}
Then equation \eqref{eq:eq32} and \eqref{eq:eq42} become
\begin{align}
 K_1(\alpha)\Phi_-^{(L)}(\alpha)=&\frac{1}{\alpha-i}
 +\frac{k_2}{\alpha+i\lambda}+e^{-i \alpha L}\left(
\frac{k_1}{\alpha-i\lambda}+\frac{1}{\alpha-i}\right)\label{eq:ex1},\\
K_1(\alpha)\Psi_+^{ (0)}(\alpha)=&\frac{1}{\alpha+2i}-\frac{k_1}{\alpha-i\lambda}+e^{i\alpha L}\left(
\frac{1}{\alpha+2i}-\frac{k_2}{\alpha+i\lambda}\right)\label{eq:ex12},\
\end{align}
and we are required to find \(\Phi_-^{(L)}(\alpha)\) and \(\Psi_+^{ (0)}(\alpha)\).
In this simple case we can exactly solve these coupled equations and
find  constants \(k_1\) and \(k_2\) 
explicitly.  Define
\begin{equation*}
b=e^{-\lambda L}, \quad c=2i\lambda f_1^-(-i\lambda),\quad \text{and} \quad
d=2i\lambda f_1^+(i\lambda),
\end{equation*}
then the constants are given by
\begin{equation*}
k_1=\frac{d-bc}{1-b},\quad \text{and} \quad  k_2=\frac{c-bd}{1-b}. 
\end{equation*}
Next, we will solve these coupled Wiener--Hopf equations using an iterative
procedure described in Section \ref{sec:Iterative}. The first step is
to neglect the the term with \(e^{-i \alpha L}\) in \eqref{eq:ex1}
and hence uncouple the equations and obtain a value for \(k_2^{(0)}\). This
value is then substituted into \eqref{eq:ex12} to obtain \(k_1^{(0)}\).
The iterative procedure leads to the following  values
\begin{equation*}
 k_1^{(n)}=d+bd-bk_2^{(n)}, \quad  k_2^{(n)}=c+bc-bk_1^{(n-1)},\quad \text{with} \quad k_2^{(0)}=c .
\end{equation*}
This converges to the actual solution as long as \(b<1\).
In most cases the convergence is very fast and the line of the first
iteration is indistinguishable to the approximate solution. In the
cases when the convergence is slow (small \(L\)
and small \(\Re(\lambda)\))
the first iteration still retains some features of the solution, see
Figure \ref{fig:3} and \ref{fig:2}. The first guess has the correct
overall shape and all the consecutive iterations have the maxima and
the minima in the right places even when the magnitude is quite
different. Also note that even in the cases of slow convergence, 
the second iteration is already  close to the actual solution.

In this special case it is possible to say more about the convergence
of the solution compared to the general case (Section
\ref{sec:Convergence}). It is easy to see that
\begin{equation*}
\Phi_-^{(L)n+1}-\Phi_-^{(L)n}= \frac{1}{K_1}\left(\frac{k_2^{(n+1)}-k_2^{(n)}}{\alpha+i\lambda}+e^{-i \alpha L}
\frac{k_1^{(n+1)}-k_1^{(n)}}{\alpha-i\lambda}\right).
\end{equation*}
Hence
\begin{equation*}
\Phi_-^{(L)n+1}-\Phi_-^{(L)n}= \frac{(k_2^{(n+1)}-k_2^{(n)})}{K_1}\left(\frac{1}{\alpha+i\lambda}+e^{-i \alpha L}
\frac{-b}{\alpha-i\lambda}\right).
\end{equation*}
This means that 
\begin{equation*}
\lVert \Phi_-^{(L)n+1}-\Phi_-^{(L)n}\rVert_2 \le b^2 \lVert \Phi_-^{(L)n}-\Phi_-^{(L)n-1}\rVert_2 .
\end{equation*}
This can be verified numerically and is illustrated in Figure \ref{fig:1}.
\begin{figure}[htbp]
\centering
  \includegraphics[scale=0.9,angle=0]{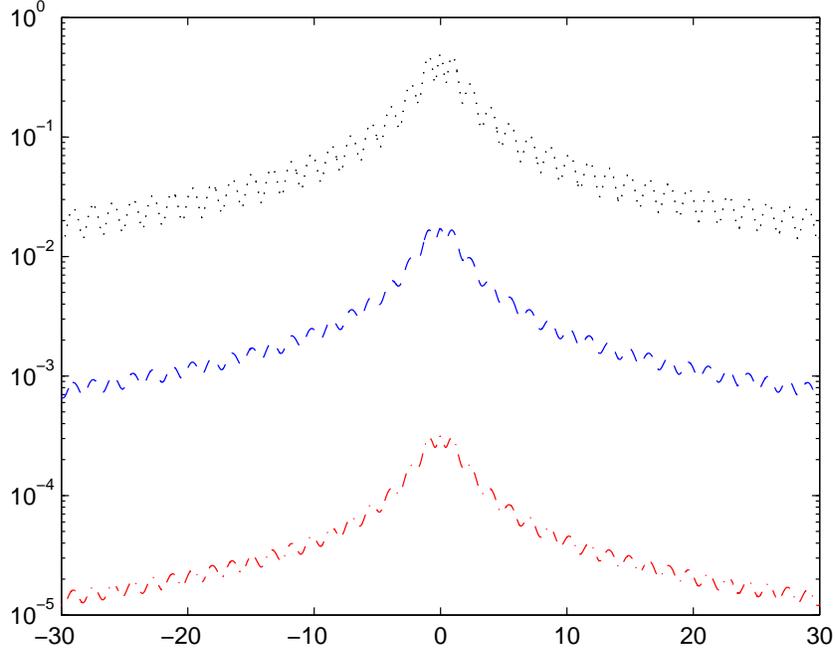}
\caption{Showing the decrease in the absolute value of $\Phi_-^{(L)}-\Phi_-^{(L)n}$. }
\label{fig:1}
 \end{figure}


%

\end{ex}

\begin{ex}
The next example is more complicated and arises from an  integral
equation. The ability to solve integral equations is important in many applications \cite{Vekua, Musk}.
Consider the following (one-sided) integral equation
\begin{equation*}
u(x)=\lambda \int^{\infty}_0 k(x-t)u(t) dt +f(x), \quad 0<x<\infty,
\end{equation*}
with the matrix kernel given by
\begin{equation*}
k(x)=
\begin{pmatrix}
e^{-|x|}  & e^{-|x-L|} \\
e^{-|x+L|}  & e^{-|x|}
 \end{pmatrix},
\end{equation*}
with \(f(x)\) a forcing function, \(\lambda\), \(L\)
real parameters and \(u(x)=(u^{(1)}(x), u^{(2)}(x))^{T}\) to be determined.  This system was considered in \cite{Abr_exp} and
more recently in \cite{Antipov}. This integral equation can be reduced to
a Wiener--Hopf equation by extending the range of \(x\)
to the whole real line and applying the Fourier transform. The
resulting Wiener--Hopf equation is of a more general type than \eqref{eq:main}.  It has been shown
\cite{Antipov}  that the solution could be reduced to
finding two constants \(C_1\)
and \(C_2\)
by the Wiener--Hopf method. In this example it is possible to obtain the
exact solution, which  provides a good way to test out the ideas that are
introduced in this paper. Once the solution to the Wiener--Hopf
equation is obtained, the inverse Fourier transform will provide the
solution to the integral equation.
\begin{figure}[htbp]
\centering
\includegraphics[scale=0.7,angle=0]{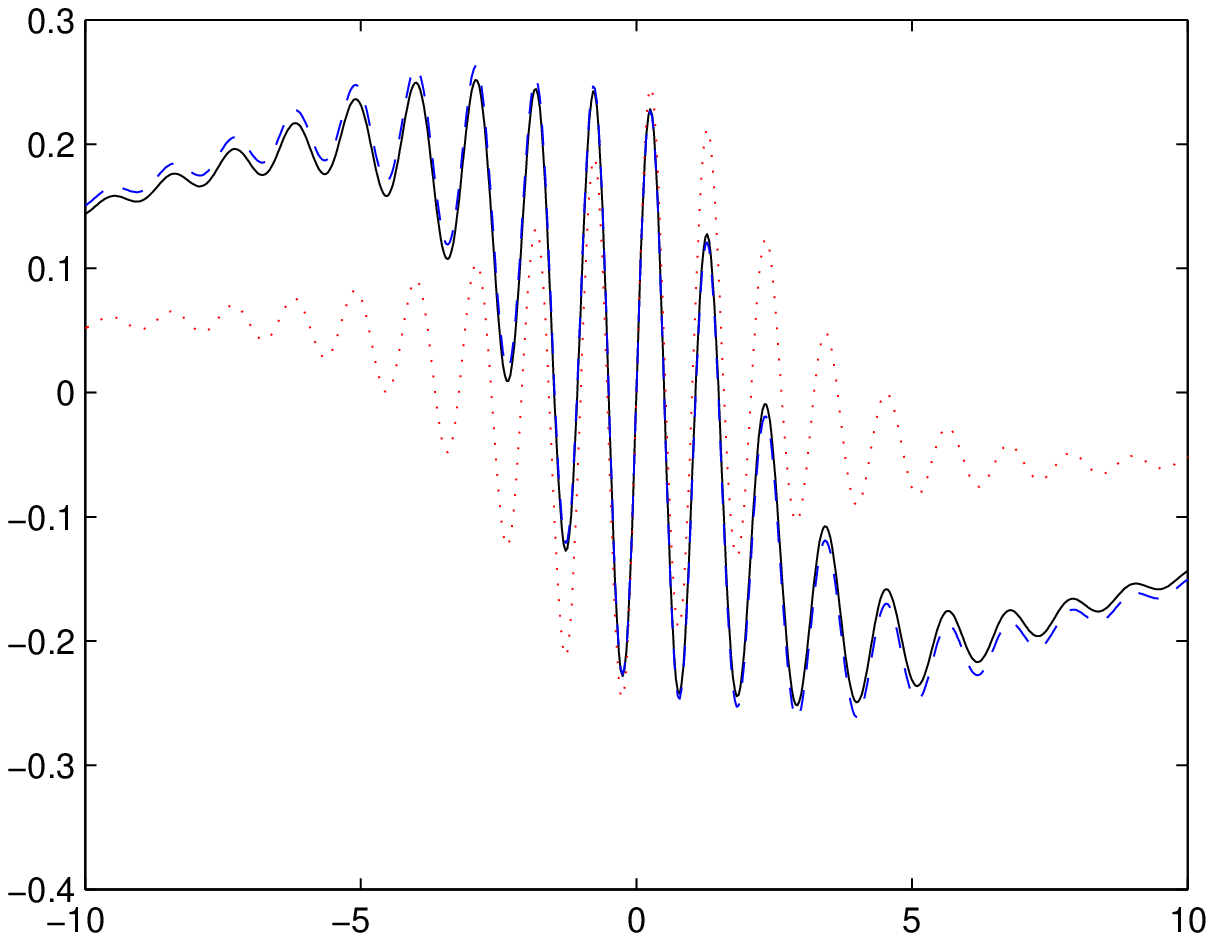}
\includegraphics[scale=0.7,angle=0]{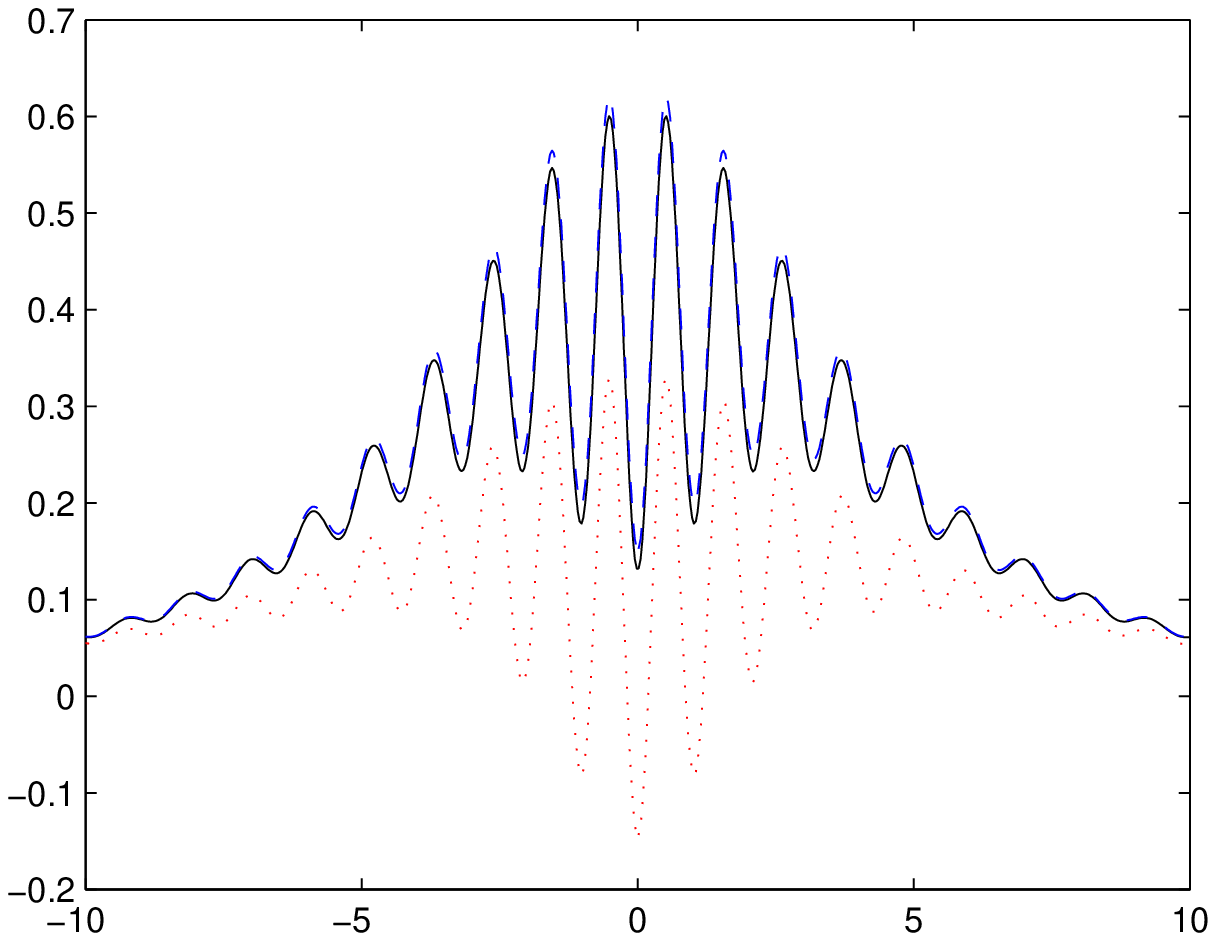}
\caption{Showing the real part (top figure) and imaginary part
  (bottom figure) of  $U_+^{(1)}$ (solid black line), $U_+^{(1)0}$ (red
dotted line)  and $U_+^{(1)1}$ (blue dashed line). The parameters are $\lambda
= -15$ and $L=0.04$.}
\label{fig:4}
 \end{figure}
First we will need to define some
functions. Let us take two constants \(\lambda_0=\sqrt{1-2\lambda}\)
and \(\lambda_1=\sqrt{1-4\lambda}\) for some  \(\lambda \in
(-\infty,0.25]\), then we define
\begin{align*}
M_-(\alpha)=& \frac{\alpha-i\lambda_1}{\alpha-i\lambda_0}, \quad M_+(\alpha)=\frac{\alpha+i\lambda_0}{\alpha+i\lambda_1}, \\
K_-(\alpha)= & \frac{\alpha-i\lambda_0}{\alpha-i}, \quad K_+(\alpha)=  \frac{\alpha+i}{\alpha+i \lambda_0}.
\end{align*}
The forcing is taken as \(F_1^+(\alpha)=\frac{1}{\alpha-2i}\) and
\(F_2^+=0\) and we use the following additive splittings 
\begin{align*}
L_1^+(\alpha)-L_1^-(\alpha)&=\frac{\alpha-i}{(\alpha-i \lambda_0)(\alpha-2i)},\\
L_2^+(\alpha)-L_2^-(\alpha)&=\frac{2 \lambda e^{-i \alpha L} }{(\alpha-2i)(\alpha+i\lambda_0)(\alpha-i\lambda_1)}.
\end{align*}

The Wiener--Hopf method reduces the solution to equations similar to
\eqref{eq:eq32} and \eqref{eq:eq42}. The unknown functions
\(U_-^{(2)}(\alpha)\) and \(U_+^{(1)}(\alpha)\) are the half-range Fourier
transforms \eqref{eqn:halfF} of \(u^{(2)}(\alpha)\) and
\(u^{(1)}(\alpha)\). They are given by

\begin{align*}
U_-^{(2)}(\alpha)=& M_-(\alpha) \left(L_2^-(\alpha)
 +\frac{C_2}{\alpha+i\lambda_0}+\frac{2\lambda K_-(\alpha) e^{-i \alpha L}
 }{(\alpha+i\lambda_1)(\alpha-i\lambda_0)}\left(L_1^-(\alpha)+\frac{C_1}{\alpha-i\lambda_0} \right)\right),\\
U_+^{(1)}(\alpha)=&K_+ (\alpha)\left(L_1^+(\alpha)
 +\frac{C_1}{\alpha-i\lambda_0}+\frac{2\lambda M_+(\alpha)e^{i \alpha L}
 }{(\alpha+i)(\alpha-i\lambda_0)}\left(L_2^+(\alpha)+\frac{C_2}{\alpha+i\lambda_0} \right)\right).
\end{align*}
The constants can be found explicitly 
\begin{align*}
C_1=\frac{d_1+d_2b}{b^2+1}, \qquad
C_2=\frac{d_2-d_1b}{b^2+1},
\end{align*}
where
\begin{align*}
b=\frac{2\lambda e^{i L\lambda_0}
 }{(\lambda_0+1)(\lambda_0+\lambda_1)},\qquad
d_1=2ib \lambda_0 L_2^+(i\lambda_0), \qquad d_2=2ib \lambda_0 L_1^-(-i\lambda_0).
\end{align*}
 By employing the iterative procedure we obtain 
\begin{align*}
C_1^{(0)}=&d_1, & C_2^{(1)}=&d_2-d_1b,\\
C_1^{(1)}=&d_1+d_2b-d_1b^2, &
C_2^{(1)}=&d_2-d_1b-d_2b^2+d_1b^3. \\
C_1^{(2)}=&d_1+d_2b-d_1b^2-d_2b^3+d_1b^4.
\end{align*}
In fact, each iteration adds two more terms in the Taylor expansion of
the constants. For example, for \(\lambda = 0.1\)
and \(L=0.0001\)
the maximum error for \(U_-^{(2)0}\)
is \(10^{-4}\)
and for \(U_-^{(2)1}\)
is \(10^{-6}\).
Note that the convergence speed of iterations depends on \(b^2\),
in the same way as in the previous example. It fact, it is small for
all values of \(\lambda\)
and \(L\) and \(b^2\le 0.18\).
This shows that in this example the convergence is extremely fast for
all values of the parameters.  Another factor, which influences the
error of the iterative solution, is suitability of the initial value. For
example, with \(\lambda = -15\)
and \(L=0.04\)
the initial guess $U_+^{(1)0}$ is very bad, see Figure \ref{fig:4}
the red dotted line. But since \(b^2= 0.0747\),
even the first iteration $U_+^{(1)0}$ is already very accurate, see Figure
\ref{fig:4} the blue dashed line.

\end{ex}

\section{Conclusion}

The Wiener--Hopf method is a powerful tool for solving boundary value
problems, which has been applied in an impressive array of
situations. In the case of scalar Wiener--Hopf equations the solution is
algorithmic. In the matrix case, the  matrix
Wiener--Hopf factorisation cannot be in general obtained and hinders
the use of the method. The present paper presents an algorithmic
way of bypassing this step, for a class of matrix functions~\eqref{eq:main}. Only
scalar Wiener--Hopf splittings are used in an iterative procedure. 
 We provide the 
conditions for the convergence of iterations to the exact
solution. This also enables us to estimate  the error at each
iteration. The numerical examples show that in most cases only a few
iterations are required.

It is clear that this method could be applied to a wider class of
Wiener--Hopf systems than~\eqref{eq:main}. For instance, the matrix in
Example 2 is not triangular, however it still can be solved using our
methods. Further work could be done to extend this method to
non-triangular matrix functions with exponential factors.

This method has been motivated by applications and was already used in
\cite{my+Lorna}. There is  scope for using this method in a variety of
boundary value problems with finite geometries or more than one
transition point in the boundary conditions. Possible applications
could be in different areas such as electromagnetism, fracture
mechanics and economics. It has also been shown in numerical Example 2
that some integral equations can also be solved, this even opens a
larger scope of applications.

\section{Acknowledgements}

I am grateful to Professor Peake for suggesting and discussing the
acoustic problem which motivated this research. I also benefited from
useful discussions with Professors Mishuris and Rogosin. I acknowledge
support from Sultan Qaboos Research Fellowship at Corpus Christi
College at University of Cambridge.

\bibliography{newgeometry}

\end{document}